\sloppy\pagestyle{plain}
\theoremstyle{definition}
\newtheorem{example}[equation]{Example}
\newtheorem{theorem}[equation]{Theorem}
\newtheorem{lemma}[equation]{Lemma}
\newtheorem*{question*}{Question}
\newtheorem*{problem*}{Problem}
\theoremstyle{remark}
\newtheorem{remark}[equation]{Remark}
\makeatletter\@addtoreset{equation}{section} \makeatother
\author{Ivan Cheltsov}
\title{Del Pezzo surfaces and local inequalities}
\address{\begin{tabbing}
\hspace*{28 em}\=\kill
47/3 Warrender Park Road\\
Edinburgh EH9 1EU, Scotland\\
\\
\texttt{I.Cheltsov@ed.ac.uk}
\end{tabbing}}
\thanks{Throughout this article, I assume that most of considered varieties are
algebraic, normal and defined over complex numbers.}
\begin{document}

\begin{abstract}
I prove new local inequality for divisors on smooth surfaces,
describe its applications, and compare it to a similar local
inequality that is already known by experts.
\end{abstract}

\maketitle

Let $X$ be a Fano variety of dimension $n\geqslant 1$ with at most
Kawamata log terminal singularities (see
\cite[Definition~6.16]{CoKoSm}). In many applications, it is
useful to \emph{measure} how singular effective
$\mathbb{Q}$-divisors $D$ on $X$ can be provided that
$D\sim_{\mathbb{Q}} -K_{X}$. Of course, this can be done in many
ways depending on what I mean by \emph{measure}. A possible
\emph{measurement} can be given by the so-called
$\alpha$-invariant of the Fano variety $X$ that can be defined as
$$
\alpha(X)=\mathrm{sup}\left\{\lambda\in\mathbb{Q}\ \left|%
\aligned
&\text{the pair}\ \left(X, \lambda D\right)\ \text{is Kawamata log terminal}\\
&\text{for every effective $\mathbb{Q}$-divisor}\ D\sim_{\mathbb{Q}} -K_{X}.\\
\endaligned\right.\right\}\in\mathbb{R}.%
$$

The invariant $\alpha(X)$ has been studied intensively by many
people who used different notation for $\alpha(X)$. The notation
$\alpha(X)$ is due to Tian who defined $\alpha(X)$ in a different
way. However, his definition coincides with the one I just gave by
\cite[Theorem~A.3]{ChSh08c}. The $\alpha$-invariants play a very
important role in K\"ahler geometry due to

\begin{theorem}[{\cite{Tian}, \cite[Criterion~6.4]{DeKo01}}] \label{theorem:alpha} Let $X$ be a Fano variety of dimension $n$ that has at most quotient singularities. If
$\alpha(X)>\frac{n}{n+1}$, then $X$ admits an orbifold
K\"ahler--Einstein metric.
\end{theorem}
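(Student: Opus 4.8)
The plan is to prove Theorem~\ref{theorem:alpha} by the Aubin--Yau continuity method, applied to $X$ regarded as an orbifold. Since $X$ has only quotient singularities it is locally uniformised by charts $\mathbb{C}^{n}/G\to X$ with $G$ finite, and every analytic step below is the $G$-equivariant form of its classical counterpart on a smooth manifold; I shall not repeat this. Fix an orbifold K\"ahler metric $\omega$ with $[\omega]=c_{1}(X)$ and let $f$ be its Ricci potential, so $\mathrm{Ric}(\omega)-\omega=\sqrt{-1}\,\partial\bar\partial f$, normalised by $\int_{X}e^{f}\omega^{n}=\int_{X}\omega^{n}=:V$. For $t\in[0,1]$ I would study the complex Monge--Amp\`ere equation
\[ \bigl(\omega+\sqrt{-1}\,\partial\bar\partial\varphi_{t}\bigr)^{n}=e^{f-t\varphi_{t}}\,\omega^{n},\qquad \omega_{\varphi_{t}}:=\omega+\sqrt{-1}\,\partial\bar\partial\varphi_{t}>0. \]
At $t=0$ this is the orbifold Calabi--Yau equation, solvable by Yau's theorem in its orbifold form; a smooth solution at $t=1$ satisfies $\mathrm{Ric}(\omega_{\varphi_{1}})=\omega_{\varphi_{1}}$, i.e. $\omega_{\varphi_{1}}$ is an orbifold K\"ahler--Einstein metric. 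So it suffices to show that the set $S\subseteq[0,1]$ of parameters for which the equation is solvable equals $[0,1]$.

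First I would establish openness of $S$. At $t_{0}\in(0,1)$ the linearisation at a solution is $v\mapsto\Delta_{\omega_{\varphi_{t_{0}}}}v+t_{0}v$; along the path $\mathrm{Ric}(\omega_{\varphi_{t}})=t\,\omega_{\varphi_{t}}+(1-t)\,\omega$, so a Bochner--Lichnerowicz argument shows the first nonzero eigenvalue of the Laplacian of $\omega_{\varphi_{t_{0}}}$ strictly exceeds $t_{0}$ for $t_{0}<1$; hence $-t_{0}$ misses the spectrum, the linearisation is an isomorphism of H\"older spaces, and the implicit function theorem gives openness (the case $t_{0}=0$ is the usual normalised opening of Yau's continuity method). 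Closedness of $S$ is the a priori estimate problem: Yau's $C^{2}$-estimate, Calabi's $C^{3}$-estimate and elliptic bootstrapping, all in uniformising charts, turn a uniform bound $\|\varphi_{t}\|_{C^{0}(X)}\leqslant C$ into uniform bounds in every $C^{k}$. For $t$ in a fixed neighbourhood of $0$ the $C^{0}$-bound comes from perturbing the Calabi--Yau solution; for $t$ bounded away from $0$ the maximum principle ($\inf_{X}f\leqslant t\,\sup_{X}\varphi_{t}$ and $t\,\inf_{X}\varphi_{t}\leqslant\sup_{X}f$) reduces it to a uniform bound on $\mathrm{osc}_{X}\varphi_{t}:=\sup_{X}\varphi_{t}-\inf_{X}\varphi_{t}$. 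Granting that, $S$ is open, closed and nonempty, so $S=[0,1]$ and the proof is complete.

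So the whole theorem rests on the uniform oscillation bound, and this is where the hypothesis $\alpha(X)>\tfrac{n}{n+1}$ enters. Write $\psi_{t}:=\varphi_{t}-\sup_{X}\varphi_{t}\leqslant0$, an orbifold $\omega$-plurisubharmonic function with $\sup_{X}\psi_{t}=0$ and $\mathrm{osc}_{X}\varphi_{t}=\sup_{X}(-\psi_{t})$. I would use three ingredients: (i) the Green's function of $(X,\omega)$ together with $\Delta_{\omega}\varphi_{t}\geqslant-n$ (valid since $\omega_{\varphi_{t}}\geqslant0$) gives $\tfrac1V\int_{X}(-\psi_{t})\,\omega^{n}\leqslant C_{1}(X,\omega)$; (ii) integrating the Monge--Amp\`ere equation gives $e^{\,t\sup_{X}\varphi_{t}}=\tfrac1V\int_{X}e^{f-t\psi_{t}}\,\omega^{n}$, so $\sup_{X}\varphi_{t}$ is tied to the integrability of $e^{-t\psi_{t}}$; and (iii) the analytic content of the $\alpha$-invariant --- this is Tian's original definition, which agrees with the one used here by \cite[Theorem~A.3]{ChSh08c} --- namely that for a fixed $\gamma$ with $\tfrac{n}{n+1}<\gamma<\alpha(X)$ there is $C_{\gamma}$ with $\int_{X}e^{-\gamma\psi}\,\omega^{n}\leqslant C_{\gamma}$ for \emph{every} orbifold $\omega$-plurisubharmonic $\psi$ normalised by $\sup_{X}\psi=0$; this uniform integrability follows from Skoda's theorem (via H\"ormander's $L^{2}$-estimate) and the $L^{1}$-compactness of this family of $\psi$.

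The hard part is to combine (i)--(iii) into a bound that stays uniform as $t\to1$. Ingredient (iii) controls the right-hand side of the Monge--Amp\`ere equation in $L^{\gamma/t}$; when $\alpha(X)>1$ one may take $\gamma>1$, this exponent exceeds $1$ for all $t\in[0,1]$, and a Moser iteration on the equation then delivers $\sup_{X}(-\psi_{t})\leqslant C$ straight from (i) and (iii). When $\tfrac{n}{n+1}<\alpha(X)\leqslant1$ the crude exponent $\gamma/t$ drops below $1$ for $t$ near $1$, and the estimate must instead be run through the Aubin energy functionals $I_{\omega},J_{\omega}$ and the monotonicity of the complex Monge--Amp\`ere operator, tracking all exponents; the conclusion is that $\alpha(X)>\tfrac{n}{n+1}$ is exactly the condition under which the integrability gain from (iii) outweighs the loss incurred along the continuity path. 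Equivalently, this is the assertion that $\alpha(X)>\tfrac{n}{n+1}$ forces the Ding functional $F^{0}_{\omega}$ on the space of K\"ahler potentials to be coercive, whence a K\"ahler--Einstein metric by the properness criterion. Carrying out this bookkeeping, and in particular pinning the threshold at $\tfrac{n}{n+1}$, is the technical core of the theorem; once it is done, $\|\varphi_{t}\|_{C^{0}(X)}$ is uniformly bounded, the higher-order estimates apply, $S=[0,1]$, and $\omega_{\varphi_{1}}$ is the desired orbifold K\"ahler--Einstein metric.
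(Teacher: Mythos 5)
This theorem is not proved in the paper at all: it is quoted as a known result of Tian and of Demailly--Koll\'ar, so there is no internal proof to compare your attempt with; I can only judge your outline against the standard argument in those sources. Your overall strategy is the right one (Tian's continuity method, run equivariantly in uniformising charts as in Demailly--Koll\'ar): openness via the eigenvalue estimate along $\mathrm{Ric}(\omega_{\varphi_t})=t\omega_{\varphi_t}+(1-t)\omega$, closedness reduced by Yau's and Calabi's estimates to a uniform $C^0$-bound, and that in turn reduced by the maximum principle to a uniform bound on $\mathrm{osc}_X\varphi_t$.

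The genuine gap is exactly at the step you label ``the technical core'': you never derive the uniform oscillation bound from $\alpha(X)>\frac{n}{n+1}$, and your closing sentence that ``$\alpha(X)>\frac{n}{n+1}$ is exactly the condition under which the integrability gain outweighs the loss'' is the statement to be proved, not an argument. The missing chain (Tian's) is concrete: one needs \emph{both} Green's function inequalities, $\sup_X\varphi_t\leqslant\frac1V\int_X\varphi_t\,\omega^n+C$ and $-\inf_X\varphi_t\leqslant-\frac1V\int_X\varphi_t\,\omega_{\varphi_t}^n+C$ (the second requires a uniform Green's function bound for $\omega_{\varphi_t}$, available for $t\geqslant t_0>0$ from the positive Ricci lower bound along the path and Myers' diameter bound --- your ingredient (i) alone, taken with respect to $\omega$, does not suffice); then one feeds the $\alpha$-integrability $\int_Xe^{-\gamma\psi_t}\omega^n\leqslant C_\gamma$ through the Monge--Amp\`ere equation and Jensen's inequality to bound $-\frac1V\int_X\psi_t\,\omega_{\varphi_t}^n$ in terms of $\frac{t}{\gamma}\bigl(I_\omega(\varphi_t)-J_\omega(\varphi_t)\bigr)$ plus a constant, and finally one invokes the elementary inequality $I_\omega-J_\omega\leqslant\frac{n}{n+1}I_\omega$ together with the two Harnack-type inequalities above to close the loop into $\bigl(1-\frac{n}{(n+1)\gamma}\bigr)\mathrm{osc}_X\varphi_t\leqslant C$, which is where the threshold $\frac{n}{n+1}$ is actually pinned down. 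Without carrying out this computation (or the equivalent coercivity/properness argument you allude to, which you also only assert), the proposal is a correct plan but not a proof; the remaining ingredients you list are standard and correctly stated.
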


The $\alpha$-invariants are usually very tricky to compute. But
they are computed in many cases. For example, the
$\alpha$-invariants of smooth del Pezzo surfaces  have been
computed as follows:

\begin{theorem}[{\cite[Theorem~1.7]{Ch07a}}]
\label{theorem:GAFA} Let $S_d$ be a smooth del Pezzo surface of
degree $d$. Then
$$
\aligned
&\alpha(S_d)=\left\{%
\aligned
&\frac{1}{3}\ \mathrm{if}\ d=9, 7 \mbox{ or }  S_d=\mathbb{F}_1,\\%
&\frac{1}{2}\ \mathrm{if}\ d=5, 6 \mbox{ or }  S_d=\mathbb{P}^1\times\mathbb{P}^1,\\%
&\frac{2}{3}\ \mathrm{if}\  d=4,\\%
\endaligned\right.%
\\
&\alpha(S_3)=\left\{%
\aligned
&\frac{2}{3}\ \mathrm{if}\ S_3\ \mathrm{is\ a\ cubic\ surface\ in}\ \mathbb{P}^{3}\ \mathrm{with\ an\ Eckardt\ point},\\%
&\frac{3}{4}\ \mathrm{if}\ S_3\ \mathrm{is\ a\ cubic\ surface\ in}\ \mathbb{P}^{3}\ \mathrm{without\ Eckardt\ points},\\%
\endaligned\right.%
\\
&
\alpha(S_2)=\left\{%
\aligned
&\frac{3}{4}\ \mathrm{if}\  |-K_{S_2}|\ \mathrm{has\ a\ tacnodal\ curve},\\%
&\frac{5}{6}\ \mathrm{if}\   |-K_{S_2}|\ \mathrm{has\ no\ tacnodal\ curves},\\%
\endaligned\right. %
\\
&\alpha(S_1)=\left\{%
\aligned
&\frac{5}{6}\ \mathrm{if}\  |-K_{S_1}|\ \mathrm{has\ a\ cuspidal\ curve},\\%
&1\ \mathrm{if}\  |-K_{S_1}|\ \mathrm{has\ no\ cuspidal\ curves}.\\%
\endaligned\right.\\
\endaligned
$$
\end{theorem}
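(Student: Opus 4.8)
The plan is to compute $\alpha(S)$ through its description as an infimum of log canonical thresholds, $\alpha(S)=\inf\{\mathrm{lct}(S,D):D\geqslant 0,\ D\sim_{\mathbb{Q}}-K_{S}\}$ (which agrees with Tian's original analytic definition by \cite[Theorem~A.3]{ChSh08c}). So for each surface in the list one must prove an \emph{upper bound}, by exhibiting one effective $\mathbb{Q}$-divisor $D\sim_{\mathbb{Q}}-K_{S}$ whose threshold equals the asserted value, and a \emph{lower bound}, by showing that every effective $D\sim_{\mathbb{Q}}-K_{S}$ has threshold at least that value. The only local input is standard: for a smooth surface $S$, a point $p\in S$ and an effective $\mathbb{Q}$-divisor $B$, the pair $(S,B)$ is log canonical at $p$ whenever $\mathrm{mult}_{p}(B)\leqslant 1$; if $(S,B)$ is not log canonical at $p$ and $C$ is an irreducible curve smooth at $p$ with $C\not\subset\mathrm{Supp}(B)$, then $(C\cdot B)_{p}>1$; one has $\mathrm{mult}_{p}(B)\,\mathrm{mult}_{p}(C)\leqslant(B\cdot C)_{p}$ for $C\not\subset\mathrm{Supp}(B)$; and if $(S,B)$ is not log canonical at $p$ then the crepant pair $(S',\widetilde{B}+(\mathrm{mult}_{p}(B)-1)E)$ on the blow-up $\sigma\colon S'\to S$ of $p$ fails to be log canonical at some point of the exceptional curve $E$ (with $\mathrm{mult}_{p}(B)-1>0$ automatically).

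\textbf{Upper bounds.} I would take: on $\mathbb{P}^{2}$ a triple line; on the degree $7$ surface the divisor $3L_{12}+2E_{1}+2E_{2}$, in the usual notation for the $(-1)$-curves; on $\mathbb{F}_{1}$ the divisor $2C_{0}+3F$ with $C_{0}$ the $(-1)$-section and $F$ a fibre --- each of threshold $\tfrac13$. On the degree $5$ and $6$ surfaces the divisor $2C+\Omega$ with $C$ a $(-1)$-curve and $\Omega\geqslant 0$ a residual class, and on $\mathbb{P}^{1}\times\mathbb{P}^{1}$ the divisor $2F_{1}+2F_{2}$ --- threshold $\tfrac12$. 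On the degree $4$ surface $\mathrm{Bl}_{p_{1},\dots,p_{5}}\mathbb{P}^{2}$ the divisor $\tfrac32E_{1}+\tfrac12\bigl(L_{12}+L_{13}+L_{14}+L_{15}+C_{0}\bigr)$, with $C_{0}=2H-E_{1}-\cdots-E_{5}$, whose threshold $\tfrac23$ is attained along $E_{1}$. On a cubic surface, the tangent hyperplane section at an Eckardt point (three concurrent lines, threshold $\tfrac23$) or, in the absence of Eckardt points, a hyperplane section equal to a line plus a tangent conic (a tacnode, threshold $\tfrac34$). On a degree $2$ surface, realised as a double cover of $\mathbb{P}^{2}$ branched over a smooth quartic $Q$, the preimage of a hyperflex line of $Q$ (an anticanonical curve with a tacnode, threshold $\tfrac34$) or of an ordinary flex line (a cusp, threshold $\tfrac56$). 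On a degree $1$ surface, a cuspidal member of the anticanonical pencil (threshold $\tfrac56$) or, if there is none, a general member (threshold $1$). In each case a short local computation confirms that the chosen divisor carries no point of smaller threshold.

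\textbf{Lower bounds.} I would argue by contradiction: assume $(S,\lambda D)$ is not log canonical at a point $p$ for some effective $D\sim_{\mathbb{Q}}-K_{S}$ and some $\lambda$ just below the asserted value. The device is to play $p$ off against a cleverly chosen curve $T$ through it --- a $(-1)$-curve, a fibre of a conic bundle on $S$, or, best of all, a member of $|-K_{S}|$ singular at $p$: for a cubic or a degree $4$ surface the tangent hyperplane section $T_{p}$ with $\mathrm{mult}_{p}(T_{p})=2$, and for $K_{S}^{2}\leqslant 2$ the (unique) member of $|-K_{S}|$ through a point off the base locus. One distinguishes whether $T\subset\mathrm{Supp}(D)$. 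If it is, one writes $D=cT+\Omega$ with $\Omega\not\supset T$; then $c\leqslant 1$, and the inequalities $\mathrm{mult}_{p}(\Omega)\leqslant(\Omega\cdot T)_{p}\leqslant\Omega\cdot T=(1-c)K_{S}^{2}$ together with adjunction along $T$ bound how singular $(S,\lambda D)$ can be at $p$ in terms of the singularity type of $T$ --- which is exactly where the Eckardt, tacnodal and cuspidal alternatives of the statement enter (in particular when $c=1$ one has $D=T\in|-K_{S}|$ and simply computes $\mathrm{lct}(S,T)$). If $T\not\subset\mathrm{Supp}(D)$, then $\mathrm{mult}_{p}(D)\,\mathrm{mult}_{p}(T)\leqslant D\cdot T=K_{S}^{2}$ bounds $\mathrm{mult}_{p}(D)$; for $K_{S}^{2}\geqslant 5$ (so $\alpha=\tfrac13$ or $\tfrac12$) this, together with the bound on $\mathrm{coeff}_{C}(D)$ for each $(-1)$-curve $C$ furnished by $\overline{\mathrm{Eff}}(S)=\langle(-1)\text{-curves}\rangle$, contradicts $\mathrm{mult}_{p}(\lambda D)\leqslant 1$ after a short case split on whether $p$ lies on a $(-1)$-curve.

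\textbf{The main obstacle.} For $K_{S}^{2}\leqslant 4$ one instead blows up $p$ and repeats the analysis on the exceptional curve $E$: if the resulting non log canonical point $q\in E$ lies on the strict transform of the tangent section and that strict transform is smooth at $q$, then $(\widetilde{T}_{p}\cdot\widetilde{B})_{q}\leqslant\lambda K_{S}^{2}-2<1$ for $\lambda$ below the asserted value, contradicting the local inequality; if $q$ lands on a singular point of the strict transform (or off it, where one falls back on $E$ or on a $(-1)$-curve through $p$) one iterates the blow-up. The real difficulty is concentrated in the low-degree cases $K_{S}^{2}\leqslant 3$, where the answer is governed by fine geometry --- Eckardt points, tacnodal or cuspidal anticanonical curves --- and where $|-K_{S}|$ is too small to pin down $\mathrm{mult}_{p}(D)$ with a single auxiliary curve: a general point of a degree $2$ surface lies on no singular anticanonical member, and a degree $1$ surface offers only a pencil, with a base point to be dealt with separately. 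In these cases one is driven into chains of blow-ups, and the bookkeeping --- which configurations on the exceptional chain can carry a non log canonical singularity, how to handle the base point of the anticanonical pencil, and the verification that the extremal configurations correspond \emph{precisely} to a tacnodal ($K_{S}^{2}=2$) or cuspidal ($K_{S}^{2}=1$) anticanonical curve (and to an Eckardt point for cubics) --- is the technical heart of the argument. Making this analysis simultaneously exhaustive and finite is the crux.
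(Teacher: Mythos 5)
The paper does not actually prove Theorem~\ref{theorem:GAFA}: it is quoted verbatim from \cite[Theorem~1.7]{Ch07a}, and the present article only uses it as background. So the comparison can only be with the strategy of the cited GAFA paper, and your outline does reproduce that strategy faithfully: the upper bounds via explicit divisors (triple line, $2C_0+3F$ on $\mathbb{F}_1$, the Eckardt triple of lines, line plus tangent conic, pull-backs of flex and hyperflex lines of the branch quartic, cuspidal anticanonical members), and the lower bounds via $\mathrm{mult}_p(B)\leqslant 1\Rightarrow$ log canonical, the intersection bound against an auxiliary curve not in $\mathrm{Supp}(D)$, the decomposition $D=cT+\Omega$ along the tangent anticanonical curve, and blow-up/adjunction arguments. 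Your explicit divisors check out (e.g.\ the degree~$4$ example $\tfrac32E_1+\tfrac12(L_{12}+\dots+L_{15}+C_0)$ is indeed anticanonical with threshold $\tfrac23$), and the easy lower bounds for $K_S^2\geqslant 5$ go through essentially as you describe.

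However, as a proof the proposal has a genuine gap, and you name it yourself: for $K_S^2\leqslant 4$, and above all for degrees $1$, $2$, $3$, the lower bound is precisely the statement that the extremal configurations are \emph{exactly} the Eckardt points, tacnodal curves, and cuspidal curves listed in the theorem, and this is not carried out --- it is only described as ``the technical heart'' and ``the crux''. This is not a routine verification that can be waved through: one must show, for instance, that on a cubic surface without Eckardt points no divisor $D\sim_{\mathbb{Q}}-K_S$ produces a non-log-canonical pair $(S,\lambda D)$ for $\lambda<\tfrac34$, and on a degree~$2$ (resp.\ degree~$1$) surface without tacnodal (resp.\ cuspidal) anticanonical curves the same for $\lambda<\tfrac56$ (resp.\ $\lambda<1$), including the base point of $|-K_{S_1}|$ and points lying on several $(-1)$-curves. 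The elementary toolkit you list ($\mathrm{mult}_p(B)\leqslant 1$, $(\Omega\cdot\Delta)_P>1$, one blow-up) is not sufficient by itself to terminate the required chains of blow-ups; this is exactly the bottleneck for which refined local inequalities of the type of Theorem~\ref{theorem:adjunction}, Theorem~\ref{theorem:Kosta} and Theorem~\ref{theorem:main} (or, in the original \cite{Ch07a}, a careful global case-by-case analysis of $\mathrm{Supp}(D)$ against lines, conics and singular anticanonical members) were developed. Until that case analysis is made exhaustive and is shown to terminate, the values $\tfrac23,\tfrac34,\tfrac56,1$ in the low-degree cases are asserted rather than proved, so the proposal should be regarded as a correct plan with its decisive step missing.
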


Note that $\alpha(X)<1$ if and only if there exists an effective
$\mathbb{Q}$-divisor $D$ on $X$ such that $D\sim_{\mathbb{Q}}
-K_{X}$ and the pair $(X,D)$ is not log canonical. Such divisors
(if they exist) are called non-log canonical \emph{special tigers}
by Keel and McKernan (see \cite[Definition~1.13]{KeelMcKernan}).
They play an important role in birational geometry of $X$. How
does one describe non-log canonical special tigers? Note that if
$D$ is a non-log canonical special tiger on $X$, then
$$
(1-\mu)D+\mu D^\prime
$$
is also a  non-log canonical special tiger on $X$ for \emph{any}
effective $\mathbb{Q}$-divisor $D^\prime$ on $X$ such that
$D^\prime\sim_{\mathbb{Q}}-K_{X}$ and \emph{any} sufficiently
small $\mu\geqslant 0$. Thus, to describe  non-log canonical
special tigers on $X$, I only need to consider those of them whose
supports do not contain supports of other non-log canonical
special tigers. Let me call such non-log canonical special tigers
\emph{Siberian tigers}. Unfortunately,  Siberian tigers are not
easy to describe in general. But sometimes it is possible. For
example, Kosta proved

\begin{lemma}[{\cite[Lemma~3.1]{Ko09}}]
\label{lemma:Kosta} Let $S$ be  a hypersurface of degree $6$ in
$\mathbb{P}(1,1,2,3)$ that has exactly one singular point $O$.
Suppose that $O$ is a Du Val singular point of type
$\mathbb{A}_3$. Then all  Siberian tigers on $X$ are cuspidal
curves in $|-K_{S}|$, which implies, in particular, that
$$
\alpha(S)=\left\{%
\aligned
&\frac{5}{6}\ \text{if there is a cuspidal curve in $|-K_{S}|$,}\\
&1 \ \text{otherwise.}\\
\endaligned\right.\\
$$
\end{lemma}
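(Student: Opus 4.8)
\emph{Proof strategy.} First I would record the relevant geometry. As $S$ has Du Val singularities, $K_S$ is Cartier, $-K_S$ is ample with $(-K_S)^2=1$, so $|-K_S|$ is a pencil with a single base point $Q$, which is a smooth point of $S$; by adjunction every member of $|-K_S|$ has arithmetic genus $1$, and since $\Gamma\cdot(-K_S)\geqslant1$ for every curve $\Gamma\subset S$, every member of $|-K_S|$ is irreducible, hence either smooth, or with one node, or with one cusp. Let $\pi\colon\widetilde S\to S$ be the minimal resolution; it is crepant, with exceptional locus a chain $E_1-E_2-E_3$ of $(-2)$-curves. There is a unique member of $|-K_{\widetilde S}|$ meeting $E_1\cup E_2\cup E_3$, of the form $\pi^*C_0=\widetilde C_0+E_1+E_2+E_3$ with $\widetilde C_0\cdot E_1=\widetilde C_0\cdot E_3=1$ and $\widetilde C_0\cdot E_2=0$; its image is the unique anticanonical curve $C_0$ through $O$, and $(S,C_0)$ is log canonical at $O$. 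Finally, every effective $D\sim_{\mathbb Q}-K_S$ has all coefficients $\leqslant D\cdot(-K_S)=1$. My plan is: fix a non-log canonical special tiger $D\sim_{\mathbb Q}-K_S$ and a point $P$ at which $(S,D)$ is not log canonical, and treat $P\neq O$ and $P=O$ separately.

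Take $P\neq O$ first; then $P$ is smooth and $\mathrm{mult}_P(D)>1$. If $P=Q$, a general $C\in|-K_S|$ is irreducible, smooth at $P$, and not a component of $D$, giving the absurdity $1=D\cdot C\geqslant\mathrm{mult}_P(D)>1$; so $P\neq Q$ and there is a unique, irreducible $C\in|-K_S|$ through $P$. If $C\not\subset\mathrm{Supp}(D)$, then $1=D\cdot C\geqslant\mathrm{mult}_P(D)\,\mathrm{mult}_P(C)>1$, again absurd; so $D=aC+\Omega$ with $0<a\leqslant1$, $C\not\subset\mathrm{Supp}(\Omega)$, $\Omega\cdot C=1-a$. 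If $C$ is smooth at $P$, then $\mathrm{mult}_P(\Omega)\leqslant\mathrm{mult}_P(\Omega\cdot C)\leqslant\Omega\cdot C=1-a$, so $\mathrm{mult}_P(D)=a+\mathrm{mult}_P(\Omega)\leqslant1$, a contradiction. If $C$ has a node at $P$, so that locally $C=C_1+C_2$ with $C_1,C_2$ smooth and transverse at $P$, I would invoke the standard two-branch local inequality --- if $(S,\alpha\Gamma_1+\beta\Gamma_2+\Delta)$ is not log canonical at a transverse intersection point $P$ of smooth curves $\Gamma_1,\Gamma_2$, with $\alpha,\beta\leqslant1$ and $\Gamma_1,\Gamma_2\not\subset\mathrm{Supp}(\Delta)$, then $\mathrm{mult}_P(\Delta\cdot\Gamma_1)>1-\beta$ or $\mathrm{mult}_P(\Delta\cdot\Gamma_2)>1-\alpha$ --- applied to $(S,aC_1+aC_2+\Omega)$; this gives $\mathrm{mult}_P(\Omega\cdot C)\geqslant\mathrm{mult}_P(\Omega\cdot C_i)>1-a$ for some $i$, contradicting $\mathrm{mult}_P(\Omega\cdot C)\leqslant\Omega\cdot C=1-a$. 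Hence $C$ has a cusp at $P$; but then $C$ is itself a non-log canonical special tiger, so $D$ being Siberian forces $\mathrm{Supp}(D)=C$, hence $D=aC$ and $a=1$: thus $D=C$ is a cuspidal curve in $|-K_S|$.

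It remains to handle $P=O$, which I expect to be the main obstacle. On $\widetilde S$, write $\pi^*D=\widetilde D+m_1E_1+m_2E_2+m_3E_3$ with $m_i\geqslant0$; crepancy gives $\pi^*(K_S+D)=K_{\widetilde S}+\widetilde D+\sum m_iE_i$, so $(S,D)$ fails to be log canonical at $O$ exactly when $(\widetilde S,\widetilde D+\sum m_iE_i)$ does so at a point of $E_1\cup E_2\cup E_3$. Writing $D=cC_0+D'$ with $c\in[0,1]$ and $C_0\not\subset\mathrm{Supp}(D')$, so $\widetilde D=c\widetilde C_0+\widetilde{D'}$ and $m_i=c+m_i'$, the inequalities $\widetilde{D'}\cdot\widetilde C_0=(1-c)-(m_1'+m_3')\geqslant0$ and $\widetilde{D'}\cdot E_1=2m_1'-m_2'\geqslant0$, $\widetilde{D'}\cdot E_3=2m_3'-m_2'\geqslant0$ force $m_1,m_2,m_3\leqslant1$; so the failure occurs at a point $\widetilde P$ of the chain with all boundary coefficients $\leqslant1$. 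There one also has $\mathrm{mult}_{\widetilde P}(\widetilde D)\leqslant\widetilde D\cdot E_i$ when $\widetilde P\in E_i$, and $\mathrm{mult}_{\widetilde P}(\widetilde D)\leqslant\widetilde D\cdot L=1-m_i$ whenever a $(-1)$-curve $L$ with $L\cdot E_j=\delta_{ij}$ passes through $\widetilde P$. Feeding these bounds into the local inequalities at $\widetilde P$ --- the multiplicity bound in the interior of an $E_i$, the two-branch inequality at $E_i\cap E_{i\pm1}$ and at $E_i\cap L$ --- leaves only finitely many configurations, each of which one verifies to be actually log canonical at $O$ (or numerically impossible). Completing this finite case analysis is the hard part, and it is precisely where the local inequalities do the work.

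Finally, the stated value of $\alpha(S)$ follows from the standard principle that, when non-log canonical special tigers exist, $\alpha(S)$ is the minimum of $\mathrm{lct}(S,T)$ over Siberian tigers $T$, and $\alpha(S)=1$ otherwise (the inequality $\alpha(S)\leqslant1$ coming from a reduced member of $|-K_S|$): by the above every Siberian tiger is a cuspidal curve in $|-K_S|$ and has log canonical threshold $5/6$, so $\alpha(S)=5/6$ if $|-K_S|$ contains such a curve and $\alpha(S)=1$ if not.
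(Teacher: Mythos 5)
Your case $P\neq O$ is essentially fine and runs parallel to the paper's argument: you reduce to the unique anticanonical curve $C$ through $P$, and your handling of the subcase $C\subset\mathrm{Supp}(D)$ (multiplicity count when $C$ is smooth at $P$, the branch-wise inversion-of-adjunction bound $\mathrm{mult}_P(\Omega\cdot C_i)>1-a$ at a node, and the Siberian property at a cusp) is a correct, slightly more computational substitute for the paper's convexity trick $\frac{1}{1-a}\Omega$. The concluding paragraph on $\alpha(S)$ is at the same level of detail as the paper itself.

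The genuine gap is the case $P=O$, which is the heart of the lemma, and you explicitly leave it unfinished (``completing this finite case analysis is the hard part''). Worse, the tools you propose to feed into that case analysis are provably insufficient. After locating the non-log-canonical point of $(\widetilde S,\widetilde D+a_1E_1+a_2E_2+a_3E_3)$ at a node of the chain, say $Q=E_1\cap E_2$, the only local inequality in your kit is the two-branch bound coming from Theorem~\ref{theorem:adjunction}, namely $\widetilde D\cdot E_1\geqslant\mathrm{mult}_Q(\widetilde D\cdot E_1)>1-a_2$ or $\widetilde D\cdot E_2>1-a_1$, i.e. $2a_1-a_2>1-a_2$ or $2a_2-a_1-a_3>1-a_1$. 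These are compatible with all the numerical constraints you derive: for instance $a_1=\frac{3}{5}$, $a_2=\frac{4}{5}$, $a_3=\frac{2}{5}$ satisfies $a_1+a_3\leqslant 1$, $2a_1\geqslant a_2$, $2a_3\geqslant a_2$, $2a_2\geqslant a_1+a_3$ and also $a_1>\frac12$, so no contradiction arises. This is exactly where the paper uses its new Theorem~\ref{theorem:main}: after checking $m=\mathrm{mult}_Q(\widetilde D)\leqslant\frac12$ (from $a_1+a_3\geqslant 2m$ and $a_1+a_3\leqslant 1$), it gets the stronger alternative $\widetilde D\cdot E_1>2(1-a_2)$ or $\widetilde D\cdot E_2>2(1-a_1)$, and each branch of this alternative contradicts $2a_1+2a_3\leqslant 2$, $2a_3\geqslant a_2$, $a_2\leqslant 1$. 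So to close your argument you would need Theorem~\ref{theorem:main} (or Theorem~\ref{theorem:Kosta}) at $E_1\cap E_2$, not the ``standard'' $1-\beta$ inequality; as written, the proposal stops short of the step that the lemma (and the paper) is really about. Incidentally, your auxiliary $(-1)$-curve $L$ with $L\cdot E_j=\delta_{ij}$ through the bad point is not known to exist through that specific point, so it cannot be relied upon either.
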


The original proof of Lemma~\ref{lemma:Kosta} is \emph{global} and
lengthy. In \cite{Ko09}, Kosta applied the very same \emph{global}
method to compute the $\alpha$-invariants of all del Pezzo
surfaces of degree $1$ that has at most Du Val singularities (in
most of cases her computations do not give description of
Siberian tigers). Later I noticed that the nature of her
\emph{global} method is, in fact, purely \emph{local}. Implicitly,
Kosta proved

\begin{theorem}[{\cite[Corollary~1.29]{ChK10}}]
\label{theorem:Kosta-original} Let $S$ be a surface, let $P$ be a
smooth point in $S$, let $\Delta_1$ and $\Delta_2$ be two
irreducible curves on $S$ that are both smooth at $P$ and
intersect transversally at $P$, and let $a_1$ and $a_2$ be
non-negative rational numbers. Suppose that
$\frac{2n-2}{n+1}a_{1}+\frac{2}{n+1}a_{2}\leqslant 1$ for some
positive integer $n\geqslant 3$. Let $\Omega$ be an effective
$\mathbb{Q}$-divisor on the~surface $S$ whose support does not
contain the curves $\Delta_1$ and $\Delta_2$. Suppose that the~log
pair $(S,a_{1}\Delta_{1}+a_{2}\Delta_{2}+\Omega)$ is not log
canonical at $P$. Then
$\mathrm{mult}_{P}(\Omega\cdot\Delta_{1})>2a_{1}-a_{2}$ or
$\mathrm{mult}_{P}(\Omega\cdot\Delta_{2})>\frac{n}{n-1}a_{2}-a_{1}$.
\end{theorem}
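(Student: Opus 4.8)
The plan is to argue by contradiction. Assume $\mathrm{mult}_{P}(\Omega\cdot\Delta_{1})\leqslant 2a_{1}-a_{2}$ and $\mathrm{mult}_{P}(\Omega\cdot\Delta_{2})\leqslant\frac{n}{n-1}a_{2}-a_{1}$, and deduce that $(S,a_{1}\Delta_{1}+a_{2}\Delta_{2}+\Omega)$ is log canonical at $P$ — contradicting the hypothesis. Write $d_{i}=\mathrm{mult}_{P}(\Omega\cdot\Delta_{i})$ and $m=\mathrm{mult}_{P}(\Omega)$; since $\Delta_{1}$ and $\Delta_{2}$ are smooth at $P$ we have $m\leqslant d_{1}$ and $m\leqslant d_{2}$. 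If $2a_{1}-a_{2}<0$ or $\frac{n}{n-1}a_{2}-a_{1}<0$ the respective inequality to be proved is automatic, so I may assume $\tfrac{1}{2}a_{2}\leqslant a_{1}\leqslant\tfrac{n}{n-1}a_{2}$; then $a_{2}\geqslant\tfrac{n-1}{n}a_{1}$, which on substitution into $\frac{2n-2}{n+1}a_{1}+\frac{2}{n+1}a_{2}\leqslant 1$ gives $\frac{2(n-1)}{n}a_{1}\leqslant 1$, hence $a_{1}\leqslant\frac{n}{2(n-1)}<1$; and if $a_{2}>1$ then $(2n-2)a_{1}\leqslant n+1-2a_{2}<(n-1)a_{2}$, so $2a_{1}-a_{2}<0$ and there is nothing to prove. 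Thus from now on $a_{1}<1$, $a_{2}\leqslant 1$, $d_{1}\leqslant 2a_{1}-a_{2}$, $d_{2}\leqslant\frac{n}{n-1}a_{2}-a_{1}$.

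The quantitative heart of the argument is the following estimate. Fix local coordinates $x,y$ at $P$ with $\Delta_{1}=\{x=0\}$ and $\Delta_{2}=\{y=0\}$, and for positive integers $\alpha,\beta$ let $v_{\alpha,\beta}$ be the monomial valuation with $v_{\alpha,\beta}(x)=\alpha$, $v_{\alpha,\beta}(y)=\beta$; its discrepancy over $S$ is $\alpha+\beta-1$, and $v_{\alpha,\beta}(\Delta_{1})=\alpha$, $v_{\alpha,\beta}(\Delta_{2})=\beta$. Since $\Omega$ contains neither $\Delta_{1}$ nor $\Delta_{2}$, the local equation of each component $\Omega_{i}$ of $\Omega$ involves a monomial $x^{\mathrm{mult}_{P}(\Omega_{i}\cdot\Delta_{2})}$ and a monomial $y^{\mathrm{mult}_{P}(\Omega_{i}\cdot\Delta_{1})}$, and summing over $i$ gives $v_{\alpha,\beta}(\Omega)\leqslant\alpha d_{2}$ and $v_{\alpha,\beta}(\Omega)\leqslant\beta d_{1}$. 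Hence the discrepancy of $v_{\alpha,\beta}$ with respect to $(S,a_{1}\Delta_{1}+a_{2}\Delta_{2}+\Omega)$ is $\geqslant -1$ once $\beta d_{1}\leqslant\alpha(1-a_{1})+\beta(1-a_{2})$ or $\alpha d_{2}\leqslant\alpha(1-a_{1})+\beta(1-a_{2})$; by the bounds on $d_{1},d_{2}$ it suffices that $(\alpha+2\beta)a_{1}\leqslant\alpha+\beta$ or $\bigl(\tfrac{n}{n-1}\alpha+\beta\bigr)a_{2}\leqslant\alpha+\beta$. If both of these failed, then
\[
\frac{2n-2}{n+1}\,a_{1}+\frac{2}{n+1}\,a_{2}\ >\ \frac{2(n-1)(\alpha+\beta)^{2}}{(\alpha+2\beta)\bigl(n\alpha+(n-1)\beta\bigr)}\ \geqslant\ 1,
\]
the last step because $2(n-1)(\alpha+\beta)^{2}-(\alpha+2\beta)\bigl(n\alpha+(n-1)\beta\bigr)=\alpha\bigl((n-2)\alpha+(n-3)\beta\bigr)\geqslant 0$ for every integer $n\geqslant 3$ — contradicting the hypothesis. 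So $(S,a_{1}\Delta_{1}+a_{2}\Delta_{2}+\Omega)$ is log canonical along every $v_{\alpha,\beta}$; in particular $v_{1,1}$ gives $m\leqslant 2-a_{1}-a_{2}$, so the exceptional curve $E_{1}$ of the blow-up of $P$ has coefficient $a_{1}+a_{2}+m-1\leqslant 1$ in the log pull-back.

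What remains is to pass from log canonicity along the $v_{\alpha,\beta}$ to log canonicity at $P$, and this is where I expect the genuine — though purely book-keeping — difficulty to lie. One argues by a blow-up induction. Blow up $P$; if the pull-back is not log canonical it fails to be so at some $Q\in E_{1}$. The failure assumption is decisive here: it gives $d_{1}+d_{2}\leqslant a_{1}+\tfrac{1}{n-1}a_{2}<\tfrac{3}{2}$, hence $m<\tfrac{3}{4}$, which (together with the bounds on $d_{1},d_{2}$, which also limit the tangency of $\Omega$ to $\Delta_{1}$ and to $\Delta_{2}$) keeps all subsequent multiplicities small at any point off the two coordinate points of $E_{1}$, so that no new non-log-canonical centre can form there. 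At each of the two coordinate points $E_{1}\cap\widetilde{\Delta}_{1}$ and $E_{1}\cap\widetilde{\Delta}_{2}$ one is back in a configuration of the same shape — a smooth surface, a smooth point, at most two transversal smooth curves with coefficients $\leqslant 1$, and a residual effective $\mathbb{Q}$-divisor containing neither — in which the residual divisor meets the two curves in $m$ and in $d_{i}-m$ respectively; the chains of infinitely near points through these points again consist of monomial valuations $v_{\alpha,\beta}$ in the original coordinates, already handled, and in the degenerate sub-cases one falls back on the classical one- and two-curve inversion-of-adjunction inequalities (the latter being the "similar inequality known to experts" the paper advertises). The main obstacle is to organise this induction carefully — checking at each stage that the numerical hypothesis is inherited and that the budget $\alpha(1-a_{1})+\beta(1-a_{2})-\min\{\alpha d_{2},\beta d_{1}\}$ stays nonnegative — using throughout that $m\leqslant d_{1},d_{2}$, that $a_{1}<1$ and $a_{2}\leqslant 1$, that the coefficient of the current exceptional curve is $\leqslant 1$, and, crucially, that $n$ is an integer with $n\geqslant 3$.
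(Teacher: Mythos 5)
Your first half is correct and even elegant: the reduction to the case $0\leqslant 2a_1-a_2$, $0\leqslant \frac{n}{n-1}a_2-a_1$ (hence $a_1<1$, $a_2\leqslant 1$, $m<1$), and the computation showing that every monomial valuation $v_{\alpha,\beta}$ in the fixed coordinates has nonnegative log discrepancy for $(S,a_1\Delta_1+a_2\Delta_2+\Omega)$ under the contradiction hypothesis $d_1\leqslant 2a_1-a_2$, $d_2\leqslant\frac{n}{n-1}a_2-a_1$ (the identity $2(n-1)(\alpha+\beta)^2-(\alpha+2\beta)(n\alpha+(n-1)\beta)=\alpha((n-2)\alpha+(n-3)\beta)$ checks out for $n\geqslant 3$). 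But the second half, which you describe as ``purely book-keeping,'' is where the theorem actually lives, and as written it has a genuine gap. Log canonicity at $P$ is a condition on \emph{all} divisorial valuations over $P$, not only the monomial ones, and your claim that ``the chains of infinitely near points through these points again consist of monomial valuations $v_{\alpha,\beta}$ in the original coordinates'' is false: as soon as some blow-up in the chain occurs at a free point of an exceptional curve (a point not lying on the current pair of coordinate curves), the resulting valuation is no longer monomial in $x,y$, and these are exactly the valuations your valuation computation does not control. Handling them is not a fallback to ``classical inequalities'' in a degenerate sub-case; it requires an induction whose hypotheses are stable under the blow-up, and the hypothesis $\frac{2n-2}{n+1}a_1+\frac{2}{n+1}a_2\leqslant 1$ of the present statement is asymmetric in $\Delta_1,\Delta_2$ and is not obviously inherited when $a_2$ is replaced by the exceptional coefficient $a_1+a_2+m-1$ and the roles of the two curves get permuted. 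You never verify this inheritance, nor do you formulate the precise inductive statement, so the proof is incomplete at its decisive step.

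For comparison, the paper does not re-prove this statement by a direct induction on its own hypotheses: it proves the symmetric Theorem~\ref{theorem:main} (if $m\leqslant 1$ and the pair is not log canonical at $P$, then $\mathrm{mult}_P(\Omega\cdot\Delta_1)>2(1-a_2)$ or $\mathrm{mult}_P(\Omega\cdot\Delta_2)>2(1-a_1)$), whose hypotheses \emph{are} preserved under the blow-up, by a short induction on the number of blow-ups using Theorem~\ref{theorem:adjunction}, and then deduces the present statement in Remark~\ref{remark:main-impies-Kosta-local} via exactly the numerical reductions you already made ($a_1\leqslant\frac{1}{2}$ essentially, $a_2\leqslant 1$, $m\leqslant 1$, and $2(1-a_2)\geqslant 2a_1-a_2$, $2(1-a_1)\geqslant\frac{n}{n-1}a_2-a_1$ in the only nontrivial range). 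So the quickest honest completion of your argument is to discard the sketched induction and instead prove (or invoke) the symmetric local inequality of Theorem~\ref{theorem:main}; your monomial-valuation observation is a nice sanity check but does not substitute for it.
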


Unfortunately, Theorem~\ref{theorem:Kosta-original} has a very
limited application scope. Together with Kosta, I generalized
Theorem~\ref{theorem:Kosta-original} as

\begin{theorem}[{\cite[Theorem~1.28]{ChK10}}]
\label{theorem:Kosta} Let $S$ be a surface, let $P$ be a smooth
point in $S$, let $\Delta_1$ and $\Delta_2$ be two irreducible
curves on $S$ that both are smooth at $P$ and intersect
transversally at $P$, let $a_1$ and $a_2$ be non-negative rational
numbers, and let $\Omega$ be an effective $\mathbb{Q}$-divisor on
the~surface $S$ whose support does not contain the curves
$\Delta_1$ and $\Delta_2$. Suppose that the~log pair
$(S,a_{1}\Delta_{1}+a_{2}\Delta_{2}+\Omega)$ is not log canonical
at $P$. Suppose that there are non-negative rational numbers
$\alpha$, $\beta$, $A$, $B$, $M$, and $N$ such that  $\alpha
a_{1}+\beta a_{2}\leqslant 1$, $A(B-1)\geqslant 1$, $M\leqslant
1$, $N\leqslant 1$, $\alpha(A+M-1)\geqslant A^{2}(B+N-1)\beta$,
$\alpha(1-M)+A\beta\geqslant A$. Suppose, in addition, that
$2M+AN\leqslant 2$ or $\alpha(B+1-MB-N)+\beta(A+1-AN-M)\geqslant
AB-1$. Then
$\mathrm{mult}_{P}(\Omega\cdot\Delta_{1})>M+Aa_{1}-a_{2}$ or
$\mathrm{mult}_{P}(\Omega\cdot\Delta_{2})>N+Ba_{2}-a_{1}$.
\end{theorem}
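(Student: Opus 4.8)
The plan is to argue by contradiction: suppose that $m_{1}:=\mathrm{mult}_{P}(\Omega\cdot\Delta_{1})\leqslant M+Aa_{1}-a_{2}$ and $m_{2}:=\mathrm{mult}_{P}(\Omega\cdot\Delta_{2})\leqslant N+Ba_{2}-a_{1}$, and put $m=\mathrm{mult}_{P}(\Omega)$. Since $\Delta_{1}$ and $\Delta_{2}$ are smooth at $P$ one has $m\leqslant m_{1}$ and $m\leqslant m_{2}$, and since $(S,a_{1}\Delta_{1}+a_{2}\Delta_{2}+\Omega)$ is not log canonical at the smooth point $P$ one has $a_{1}+a_{2}+m=\mathrm{mult}_{P}(a_{1}\Delta_{1}+a_{2}\Delta_{2}+\Omega)>1$. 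I would first observe that the six numerical hypotheses force $\alpha>0$ and $\beta>0$ (for instance $\alpha=0$ reduces $\alpha(A+M-1)\geqslant A^{2}(B+N-1)\beta$ and $\alpha(1-M)+A\beta\geqslant A$ to relations incompatible with $A(B-1)\geqslant 1$), so that $\alpha a_{1}+\beta a_{2}\leqslant 1$ together with $A(B-1)\geqslant 1$ already rules out the degenerate configurations in which the asserted inequalities could fail for trivial reasons.

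Now blow up $P$. Let $\pi\colon\widetilde S\to S$ be the blow-up of $P$ with exceptional curve $E$, let $\widetilde\Delta_{i}$ and $\widetilde\Omega$ be proper transforms, and set $Q=\widetilde\Delta_{1}\cap E$ and $R=\widetilde\Delta_{2}\cap E$; these are distinct points of $E$ because $\Delta_{1}$ and $\Delta_{2}$ meet transversally at $P$. From $K_{\widetilde S}=\pi^{*}K_{S}+E$, $\pi^{*}\Delta_{i}=\widetilde\Delta_{i}+E$, and $\pi^{*}\Omega=\widetilde\Omega+mE$ one gets that
$$
\bigl(\widetilde S,\ a_{1}\widetilde\Delta_{1}+a_{2}\widetilde\Delta_{2}+\widetilde\Omega+\theta E\bigr),\qquad \theta:=a_{1}+a_{2}+m-1>0,
$$
is not log canonical at some point $O\in E$, together with the exact identities $m_{1}=m+\mathrm{mult}_{Q}(\widetilde\Omega\cdot\widetilde\Delta_{1})$, $m_{2}=m+\mathrm{mult}_{R}(\widetilde\Omega\cdot\widetilde\Delta_{2})$, and $\widetilde\Omega\cdot E=m$. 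One then splits into cases according to the position of $O$.

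If $O\notin\widetilde\Delta_{1}\cup\widetilde\Delta_{2}$ and $\theta\leqslant 1$, then near $O$ the pair is $(\widetilde S,\theta E+\widetilde\Omega)$ with $E$ smooth; were $\mathrm{mult}_{O}(\widetilde\Omega\cdot E)\leqslant 1$, inversion of adjunction along $E$ would make $(\widetilde S,E+\widetilde\Omega)$, hence $(\widetilde S,\theta E+\widetilde\Omega)$, log canonical at $O$, a contradiction, so $m\geqslant\mathrm{mult}_{O}(\widetilde\Omega\cdot E)>1$. Feeding $m>1$ into $m\leqslant m_{1}\leqslant M+Aa_{1}-a_{2}$ and $m\leqslant m_{2}\leqslant N+Ba_{2}-a_{1}$ and solving (using $a_{1},a_{2}\geqslant 0$ and $AB>1$) gives $a_{1}>\frac{B+1-MB-N}{AB-1}$ and $a_{2}>\frac{A+1-AN-M}{AB-1}$; multiplying by $\alpha$, $\beta$ and adding, $\alpha a_{1}+\beta a_{2}>\frac{\alpha(B+1-MB-N)+\beta(A+1-AN-M)}{AB-1}$, which together with $\alpha a_{1}+\beta a_{2}\leqslant 1$ contradicts $\alpha(B+1-MB-N)+\beta(A+1-AN-M)\geqslant AB-1$; when the available alternative is instead $2M+AN\leqslant 2$, I expect the parallel argument (now with $\theta>1$, so $m>2-a_{1}-a_{2}$, or via a weighted blow-up of $P$ whose exceptional divisor carries the right discrepancy and whose quotient-singular point on $E$ is controlled exactly by $2M+AN\leqslant 2$) to yield the same contradiction. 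If $O=Q$, then at $Q$ the transversal smooth irreducible curves $\widetilde\Delta_{1}$ (coefficient $a_{1}$) and $E$ (coefficient $\theta=a_{1}+a_{2}+m-1$) together with $\widetilde\Omega$ form a pair that is not log canonical at $Q$, and I would apply the very theorem being proved to this new configuration, with $\widetilde\Delta_{1},E$ in the roles of $\Delta_{1},\Delta_{2}$, with $a_{1},\theta$ in the roles of $a_{1},a_{2}$, and with a new sextuple $(\alpha',\beta',A',B',M',N')$: the conditions $A(B-1)\geqslant 1$, $\alpha(A+M-1)\geqslant A^{2}(B+N-1)\beta$, $\alpha(1-M)+A\beta\geqslant A$ are exactly what is needed to exhibit such a sextuple satisfying all the hypotheses while a nonnegative integer invariant (essentially the length of the chain of infinitely near points of $P$ witnessing non-log-canonicity) strictly drops; since that invariant cannot decrease forever, the induction terminates, and the resulting conclusion contradicts $m_{1}\leqslant M+Aa_{1}-a_{2}$ via $m_{1}=m+\mathrm{mult}_{Q}(\widetilde\Omega\cdot\widetilde\Delta_{1})$ and $\widetilde\Omega\cdot E=m$. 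The case $O=R$ is symmetric, interchanging $\Delta_{1}\leftrightarrow\Delta_{2}$ and $(\alpha,A,M)\leftrightarrow(\beta,B,N)$.

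The hard part is precisely this last portion: checking that the descended numerical data again satisfies each of the six hypotheses — which is where the three opaque relations $A(B-1)\geqslant 1$, $\alpha(A+M-1)\geqslant A^{2}(B+N-1)\beta$, and $\alpha(1-M)+A\beta\geqslant A$ pull their weight — together with establishing termination of the induction and verifying that the dichotomy ``$2M+AN\leqslant 2$, or $\alpha(B+1-MB-N)+\beta(A+1-AN-M)\geqslant AB-1$'' genuinely exhausts every sub-configuration that can occur. The remaining ingredients — the discrepancy computation, the multiplicity bookkeeping, and inversion of adjunction for a smooth curve on a smooth surface — are routine.
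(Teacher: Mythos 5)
There is a genuine gap, and you concede it yourself: the only case you actually complete is the easy one where the non-log-canonical point $O$ of the blown-up pair lies off both $\widetilde\Delta_1$ and $\widetilde\Delta_2$, with $\theta=a_1+a_2+m-1\leqslant 1$, and even there only under the second alternative hypothesis $\alpha(B+1-MB-N)+\beta(A+1-AN-M)\geqslant AB-1$. The heart of the theorem is the case $O=\widetilde\Delta_1\cap E$ or $O=\widetilde\Delta_2\cap E$, and for it you never exhibit the new sextuple $(\alpha',\beta',A',B',M',N')$, never verify that the seven numerical hypotheses are preserved for the pair $(\widetilde S,\,a_1\widetilde\Delta_1+\theta E+\widetilde\Omega)$ (resp.\ the pair along $\widetilde\Delta_2$), never show how the conclusion at $Q$ pulls back to contradict $m_1\leqslant M+Aa_1-a_2$ via $m_1=m+\mathrm{mult}_Q(\widetilde\Omega\cdot\widetilde\Delta_1)$, and never actually use the alternative $2M+AN\leqslant 2$ (it enters only as ``I expect the parallel argument \dots to yield the same contradiction'', plus a vague appeal to a weighted blow-up). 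Since the entire content of the theorem is exactly the choice of the descended parameters, the verification that the opaque inequalities are stable under the iteration, and the role of the dichotomy, what you have written is a strategy outline, not a proof. Note that the present paper does not reprove this statement either: it cites \cite[Theorem~1.28]{ChK10} and explicitly describes that proof as lengthy, so the missing bookkeeping is not something one can wave through.

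Two further points tell against the scheme as you set it up. First, your claim that the case $O=R$ is ``symmetric, interchanging $(\alpha,A,M)\leftrightarrow(\beta,B,N)$'' cannot be right: the hypotheses are not invariant under that swap (for instance $\alpha(A+M-1)\geqslant A^{2}(B+N-1)\beta$ and $\alpha(1-M)+A\beta\geqslant A$ do not transform into each other), and the paper stresses that the proof of this theorem is genuinely \emph{asymmetric} in $\Delta_1$ and $\Delta_2$ --- the iteration of blow-ups always follows the proper transform of $\Delta_2$, so the two points $Q$ and $R$ must be handled by different arguments (one inductive, one terminal), not by a symmetry. Second, some of your auxiliary assertions are off: the hypotheses force $\alpha>0$ but not $\beta>0$ (take $M=N=0$, $A=1$, $B=2$, $\alpha=1$, $\beta=0$: all conditions, including $2M+AN\leqslant 2$, hold), and the subcase $\theta>1$, where inversion of adjunction along $E$ is not available and you only get $m>2-a_1-a_2$, is left untreated. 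None of these is fatal to the general strategy (blow up, adjunction, induction on the number of blow-ups), which is indeed the standard one, but as it stands the proposal does not establish the theorem.
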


Despite the fact that Theorem~\ref{theorem:Kosta} looks very ugly,
it is much more flexible and much more applicable than
Theorem~\ref{theorem:Kosta-original}. By
\cite[Excercise~6.26]{CoKoSm}, an analogue of
Theorem~\ref{theorem:Kosta} holds for surfaces with at most
quitient singularities. This helped me to find in \cite{Ch13} many
new applications of Theorem~\ref{theorem:Kosta} that do not follow
from Theorem~\ref{theorem:Kosta-original}.

\begin{remark}
\label{remark:applying-theorem-Kosta} How does one apply
Theorem~\ref{theorem:Kosta}? Let me say few words about this. Let
$S$ be a smooth surface, and let $D$ be an effective
$\mathbb{Q}$-divisor on $S$. The purpose of
Theorem~\ref{theorem:Kosta} is to prove that $(S, D)$ is log
canonical provided that $D$ satisfies some \emph{global} numerical
conditions. To do so, I assume that $(S,D)$ is not log canonical
at $P$ and seek for a contradiction. First, I look for some nice
curves that pass through $P$ that has very small intersection with
$D$. Suppose I found two such curves, say $\Delta_1$ and
$\Delta_2$, that are both irreducible and both pass through $P$.
If $\Delta_1$ or $\Delta_2$ are not contained in the support of
the divisor $D$, I can bound $\mathrm{mult}_{P}(D)$  by
$D\cdot\Delta_1$ or $D\cdot\Delta_2$ and, hopefully, get a
contradiction with $\mathrm{mult}_{P}(D)>1$, which follows from
the fact $(S,D)$ is not log canonical at $P$. This shows that I
should look for the curves $\Delta_1$ and $\Delta_2$ among the
curves which are close enough to the boundary of the Mori cone
$\overline{\mathbb{NE}}(S)$. Suppose that both curves $\Delta_1$
and $\Delta_2$ lie in the boundary of the Mori cone
$\overline{\mathbb{NE}}(S)$. Then $\Delta_1^2\leqslant 0$ and
$\Delta_2^2\leqslant 0$. Keeping in mind, that the curves
$\Delta_1$ and $\Delta_2$ can, a priori, be contained in the
support of the divisor $D$, I must put
$D=a_1\Delta_1+a_2\Delta_2+\Omega$ for some non-negative rational
numbers $a_1$ and $a_2$, where $\Omega$ is an effective
$\mathbb{Q}$-divisor on $S$ whose support does not contain the
curves $\Delta_1$ and $\Delta_2$. Then I try to bound $a_1$ and
$a_2$ using some \emph{global} methods. Usually, I end up with two
non-negative rational numbers $\alpha$ and $\beta$ such that
$\alpha a_1+\beta a_2\leqslant 1$. Put $M=D\cdot\Delta_1$,
$N=C\cdot\Delta_2$, $A=-\Delta_1^2$, and $B=-\Delta_1^2$. Suppose
that $\Delta_1$ and $\Delta_2$ are both smooth at $P$ and
intersect transversally at $P$ (otherwise I need to blow up the
surface $S$ and replace the pair $(S,D)$ by its \emph{log pull
back}). If I am lucky, then $A(B-1)\geqslant 1$, $M\leqslant 1$,
$N\leqslant 1$, $\alpha(A+M-1)\geqslant A^{2}(B+N-1)\beta$,
$\alpha(1-M)+A\beta\geqslant A$, and either $2M+AN\leqslant 2$ or
$\alpha(B+1-MB-N)+\beta(A+1-AN-M)\geqslant AB-1$ (or both), which
implies that
$$
M+Aa_{1}-a_{2}\geqslant M+Aa_{1}-a_{2}\Delta_1\cdot\Delta_2=\Omega\cdot\Delta_{1}\geqslant\mathrm{mult}_{P}\Big(\Omega\cdot\Delta_{1}\Big)>M+Aa_{1}-a_{2}%
$$
or
$$
N+Ba_{2}-a_{1}\geqslant
N+Ba_{2}-a_{1}\Delta_1\cdot\Delta_2=\Omega\cdot\Delta_{2}\geqslant\mathrm{mult}_{P}\Big(\Omega\cdot\Delta_{2}\Big)>N+Ba_{2}-a_{1}
$$
by Theorem~\ref{theorem:Kosta}. This is the contradiction I was
looking for.
\end{remark}

Unfortunately, the hypotheses of Theorem~\ref{theorem:Kosta} are
not easy to verify in general. Moreover, the proof of
Theorem~\ref{theorem:Kosta} is very lengthy. It seems likely that
Theorem~\ref{theorem:Kosta} is a special case or, perhaps, a
corollary of a more general statement that looks better and has a
shorter proof. Ideally, the proof of such generalization, if it
exists, should be inductive like the proof of

\begin{theorem}[{\cite[Excercise~6.31]{CoKoSm}}]
\label{theorem:adjunction} Let $S$ be a surface, let $P$ be a
smooth point in $S$, let $\Delta$ be an irreducible curve on $S$
that is smooth at $P$, let $a$ be a non-negative rational number
such that $a\leqslant 1$, and let $\Omega$ be an effective
$\mathbb{Q}$-divisor on the~surface $S$ whose support does not
contain the curve $\Delta$. Suppose that the~log pair
$(S,a\Delta+\Omega)$ is not log canonical at $P$. Then
$\mathrm{mult}_{P}(\Omega\cdot\Delta)>1$.
\end{theorem}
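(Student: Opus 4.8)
The plan is to prove Theorem~\ref{theorem:adjunction} by the standard inversion-of-adjunction technique for surfaces, combined with an inductive argument on the number of blow-ups needed to extract a divisor computing the log canonical threshold. First I would observe that since $(S, a\Delta + \Omega)$ is not log canonical at $P$ while $a \leqslant 1$, there is a birational morphism $\pi\colon \widetilde S \to S$ which is a composition of blow-ups of smooth points, such that some $\pi$-exceptional divisor $E$ has log discrepancy $\leqslant 0$ with respect to the log pull-back of $(S, a\Delta + \Omega)$. If $E$ can already be obtained by a single blow-up of $P$, then $E$ is the exceptional curve of the blow-up $\sigma\colon S' \to S$ at $P$, and a direct computation of the log discrepancy of $E$ gives $\mathrm{mult}_P(\Omega) + a \geqslant 2$, whence $\mathrm{mult}_P(\Omega) \geqslant 2 - a \geqslant 1$; but one must get a strict inequality for the intersection number, which follows because $\mathrm{mult}_P(\Omega\cdot\Delta) \geqslant \mathrm{mult}_P(\Delta)\cdot\mathrm{mult}_P(\Omega) = \mathrm{mult}_P(\Omega)$, and if equality held throughout one could sharpen by noting $\Delta$ is smooth at $P$ so the strict containment of supports forces a strict inequality, or more cleanly, by passing to the case below.

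The inductive step is the heart of the argument. Let $\sigma\colon S' \to S$ be the blow-up of $P$ with exceptional curve $E \cong \mathbb{P}^1$, let $P'$ be the point $E \cap \widetilde\Delta$ where $\widetilde\Delta$ is the strict transform of $\Delta$ (this is a single point since $\Delta$ is smooth at $P$), and write $\sigma^*\Omega = \widetilde\Omega + \mathrm{mult}_P(\Omega) E$ and $\sigma^*\Delta = \widetilde\Delta + E$. The log pull-back of $(S, a\Delta + \Omega)$ is then $(S', a\widetilde\Delta + \widetilde\Omega + (a + \mathrm{mult}_P(\Omega) - 1)E)$, and this pair is not log canonical at some point of $E$. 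I would first dispose of the case where the bad point is not $P'$: then the pair $(S', \widetilde\Omega + (a + \mathrm{mult}_P(\Omega) - 1)E)$ is not log canonical at a point of $E$ where $\widetilde\Delta$ does not pass, and applying the theorem inductively (with $E$ playing the role of $\Delta$, and using that $a + \mathrm{mult}_P(\Omega) - 1 \leqslant 1$, which needs separate justification, or else simply the one-curve adjunction bound $\mathrm{mult}_{P'}(\widetilde\Omega) > 1$) yields $\mathrm{mult}_P(\Omega\cdot\Delta) \geqslant \mathrm{mult}_P(\Omega) \geqslant \widetilde\Omega \cdot E + (\text{something}) > 1$. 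If the bad point is $P'$, I apply the theorem inductively at $P'$ to the pair $(S', a\widetilde\Delta + (a + \mathrm{mult}_P(\Omega) - 1)E + \widetilde\Omega)$: here there are two curves through $P'$, namely $\widetilde\Delta$ and $E$, but $E$ can be absorbed — one uses adjunction to $\widetilde\Delta$ directly, getting $\mathrm{mult}_{P'}\big((\widetilde\Omega + (a+\mathrm{mult}_P(\Omega)-1)E)\cdot\widetilde\Delta\big) > 1$, i.e. $\widetilde\Omega\cdot\widetilde\Delta + (a + \mathrm{mult}_P(\Omega) - 1) \cdot 1 \geqslant \mathrm{mult}_{P'}(\widetilde\Omega\cdot\widetilde\Delta) + a + \mathrm{mult}_P(\Omega) - 1 > 1$; since $\widetilde\Omega\cdot\widetilde\Delta = \Omega\cdot\Delta - \mathrm{mult}_P(\Omega)\cdot\mathrm{mult}_P(\Delta) = \Omega\cdot\Delta - \mathrm{mult}_P(\Omega)$, this rearranges to $\mathrm{mult}_P(\Omega\cdot\Delta) \geqslant \Omega\cdot\Delta > 2 - a \geqslant 1$.

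The cleanest route, which I would actually follow, avoids the full induction: one runs the minimal log resolution and uses that $(S, a\Delta + \Omega)$ not log canonical at $P$ means (by the two-dimensional case of inversion of adjunction, \cite[Exercise~6.29 or similar]{CoKoSm}) that $\Omega|_\Delta + (\text{the different})$ has a coefficient exceeding $1$ near $P$; since $\Delta$ is smooth and $a \leqslant 1$, the different contributes exactly $a$ at $P$ via adjunction $(K_S + \Delta + \Omega)|_\Delta = K_\Delta + \mathrm{Diff}_\Delta(\Omega)$ with $\mathrm{Diff}_\Delta(\Omega) \geqslant \Omega|_\Delta$, and the non-log-canonicity forces $\mathrm{mult}_P(\Omega|_\Delta) > 1$. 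Here $\mathrm{mult}_P(\Omega|_\Delta) = \mathrm{mult}_P(\Omega\cdot\Delta)$ because $\Delta \not\subset \mathrm{Supp}(\Omega)$ and $\Delta$ is smooth at $P$. The main obstacle I anticipate is handling the \emph{strictness} of the final inequality correctly together with the contribution of the coefficient $a$: one must be careful that non-log-canonicity of $(S, a\Delta + \Omega)$ — as opposed to the weaker statement for $(S,\Omega)$ alone — is exactly what is needed so that even when $a$ is close to $1$ the different does not already swamp the bound, and that the passage between $\mathrm{mult}_P(\Omega\cdot\Delta)$ and the restriction multiplicity is genuinely an equality (not merely an inequality) so the strict $>1$ survives; both of these are routine but must be spelled out, and the inductive bookkeeping in the first approach is where most of the care is needed.
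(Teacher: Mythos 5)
Your second paragraph is essentially the paper's own argument: blow up $P$, observe that the log pull-back $(\tilde S,\, a\tilde\Delta+(a+m-1)E+\tilde\Omega)$ with $m=\mathrm{mult}_P(\Omega)$ is not log canonical at some $Q\in E$, rule out $Q\notin\tilde\Delta$ by the one-curve bound applied to $E$ (the coefficient bound $a+m-1\leqslant 1$ that you flagged comes from the harmless reduction to $m\leqslant 1$, since otherwise $\mathrm{mult}_P(\Omega\cdot\Delta)\geqslant m>1$ already), and at $Q=\tilde\Delta\cap E$ apply the statement inductively --- the induction being on the number $r$ of blow ups needed to extract the non-log-canonical place --- with $\Delta:=\tilde\Delta$ and $\Omega:=(a+m-1)E+\tilde\Omega$. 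Two slips in your write-up are worth repairing, though both are cosmetic. In the base case no extra device is needed for strictness: not log canonical means the relevant coefficient strictly exceeds $1$, i.e. $a+m-1>1$, so $m>2-a\geqslant 1$. In the inductive step, the inequality $\mathrm{mult}_P(\Omega\cdot\Delta)\geqslant\Omega\cdot\Delta$ is backwards if $\Omega\cdot\Delta$ denotes the global intersection number; what you actually need is the local formula $\mathrm{mult}_P(\Omega\cdot\Delta)=m+\mathrm{mult}_{P'}(\tilde\Omega\cdot\tilde\Delta)$ (valid because $\tilde\Delta$ meets the exceptional curve only at $P'$), which gives $\mathrm{mult}_P(\Omega\cdot\Delta)>m+(2-a-m)=2-a\geqslant 1$, exactly as in the paper. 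Your preferred third route is genuinely different from the paper's: since $a\leqslant 1$, non-log canonicity of $(S,a\Delta+\Omega)$ at $P$ implies that of $(S,\Delta+\Omega)$, and inversion of adjunction on the smooth curve $\Delta$ (on a smooth surface the different of $\Omega$ is just $\Omega\vert_\Delta$; it does not ``contribute $a$'', the hypothesis $a\leqslant 1$ is used only to raise the coefficient of $\Delta$ to $1$) gives $\mathrm{mult}_P(\Omega\vert_\Delta)>1$, which equals $\mathrm{mult}_P(\Omega\cdot\Delta)$ as $\Delta\not\subset\mathrm{Supp}(\Omega)$. That proof is shorter but rests on the connectedness/inversion-of-adjunction theorem, whereas the paper deliberately gives a self-contained induction on blow ups, which is precisely the template it then reuses to prove Theorem~\ref{theorem:main}.
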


\begin{proof}
Put $m=\mathrm{mult}(\Omega)$. If $m>1$, then I am done, since
$\mathrm{mult}_{P}(\Omega\cdot\Delta)\geqslant m$. In particular,
I may assume that the~log pair $(S,a\Delta+\Omega)$ is log
canonical in a punctured neighborhood of the point $P$. Since
the~log pair $(S,a\Delta+\Omega)$ is not log canonical at $P$,
there exists a birational morphism $h\colon\hat{S}\to S$ that is a
composition of $r\geqslant 1$ blow ups of smooth points dominating
$P$, and there exists an $h$-exceptional divisor, say $E_r$, such
that $e_r>1$, where $e_r$ is a rational number determined by
$$
K_{\hat{S}}+a\hat{\Delta}+\hat{\Omega}+\sum_{i=1}^{r}e_iE_i\sim_{\mathbb{Q}}h^{*}\big(K_{S}+a\Delta+\Omega\big),%
$$
where each $e_i$ is a rational number, each $E_i$ is an
$h$-exceptional divisor,  $\hat{\Omega}$ is a proper transform on
$\hat{S}$ of the divisor $\Omega$, and $\hat{\Delta}$ is a proper
transform on $\hat{S}$ of the curve $\Delta$.

Let $f\colon \tilde{S}\to S$ be the blow up of the point $P$, let
$\tilde{\Omega}$ be the proper transform of the divisor $\Omega$
on the surface $\tilde{S}$, let $E$ be the $f$-exceptional curve,
and let $\tilde{\Delta}$ be the proper transform of the curve
$\Delta$ on the surface $\tilde{S}$. Then the log pair
$(\tilde{S}, a\tilde{\Delta}+(a+m-1)E+\tilde{\Omega})$ is not log
canonical at some point $Q\in E$.

Let me prove the inequality
$\mathrm{mult}_{P}(\Omega\cdot\Delta)>1$ by induction on $r$. If
$r=1$, then $a+m-1>1$, which implies that $m>2-a\geqslant 1$. This
implies that $\mathrm{mult}_{P}(\Omega\cdot\Delta)>1$ if $r=1$.
Thus, I may assume that $r\geqslant 2$. Since
$$
\mathrm{mult}_{P}\Big(\Omega\cdot\Delta\Big)\geqslant m+\mathrm{mult}_{Q}\Big(\tilde{\Omega}\cdot\tilde{\Delta}\Big),%
$$
it is enough to prove that
$m+\mathrm{mult}_{Q}(\tilde{\Omega}\cdot\tilde{\Delta})>1$.
Moreover, I may assume that $m\leqslant 1$, since
$\mathrm{mult}_{P}(\Omega\cdot\Delta)\geqslant m$. Then the log
pair $(\tilde{S}, a\tilde{\Delta}+(a+m-1)E+\tilde{\Omega})$ is log
canonical at a punctured neighborhood of the point $Q\in E$, since
$a+m-1\leqslant 2$.

If $Q\not\in\tilde{\Delta}$, then the~log pair
$(\tilde{S},(a+m-1)E+\tilde{\Omega})$ is not log canonical at
the~point $Q$, which implies that
$$
m=\tilde{\Omega}\cdot
E\geqslant\mathrm{mult}_{Q}\Big(\tilde{\Omega}\cdot E\Big)>1
$$
by
induction. The latter implies that $Q=\tilde{\Delta}\cap E$, since
$m\leqslant 1$. Then
$$
a+m-1+\mathrm{mult}_{Q}\Big(\tilde{\Omega}\cdot\tilde{\Delta}\Big)=\mathrm{mult}_{Q}\Bigg(\Big((a+m-1)E+\tilde{\Omega}\Big)\cdot\tilde{\Delta}\Bigg)>1%
$$
by induction. This implies that
$\mathrm{mult}_{Q}(\tilde{\Omega}\cdot\tilde{\Delta})>2-a-m$. Then
$m+\mathrm{mult}_{Q}(\tilde{\Omega}\cdot\tilde{\Delta})>2-a\geqslant
1$ as required.
\end{proof}

Recently, I jointly with Park and Won proved that all Siberian
tigers on smooth cubic surfaces are just anticanonical curves that
have non-log canonical singularities (see
\cite[Theorem~1.12]{CheltsovParkWon2}). This follows from

\begin{theorem}[{\cite[Corollary~1.13]{CheltsovParkWon2}}]
\label{theorem:cubic-surface} Let $S$ be a smooth cubic surface in
$\mathbb{P}^3$, let $P$ be a point in $S$, let $T_P$ be the unique
hyperplane section of the surface $S$ that is singular at $P$, let
$D$ be any effective $\mathbb{Q}$-divisor on the surface $S$ such
that $D\sim_{\mathbb{Q}} -K_{S}$. Then $(S,D)$ is log canonical at
$P$ provided that $\mathrm{Supp}(D)$ does not contain at least one
irreducible component of $\mathrm{Supp}(T_P)$.
\end{theorem}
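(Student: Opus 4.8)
The plan is to argue by contradiction. Suppose $(S,D)$ is not log canonical at $P$ and put $m=\mathrm{mult}_{P}(D)$, so $m>1$. Everything is driven by $D\sim_{\mathbb{Q}}-K_{S}$: then $D\cdot L=1$ for every line $L\subset S$, $D\cdot C=2$ for every conic $C\subset S$, $D^{2}=3$, $D\cdot H=3$ for every $H\in|-K_{S}|$, and $D\cdot T_{P}=3$ while $\mathrm{mult}_{P}(T_{P})\geqslant 2$. First I split into cases by the type of the plane cubic $T_{P}$: irreducible with a node or a cusp at $P$; a union $C+L$ of a conic and a line (meeting transversally at $P$ and at one further point, or tangent to each other at $P$); or a union $L_{1}+L_{2}+L_{3}$ of three lines (a triangle, or three concurrent lines through $P$, i.e.\ $P$ an Eckardt point). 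In each case the hypothesis produces a component $\Theta$ of $T_{P}$ with $\Theta\not\subset\mathrm{Supp}(D)$; when $T_{P}$ is reducible I write $D=\sum_{i}a_{i}\Theta_{i}+\Omega$ with the sum over components $\Theta_{i}\subset\mathrm{Supp}(D)$ and $\Omega$ containing no component of $T_{P}$, and I bound the $a_{i}$ by intersecting $D$ with $\Theta$ and with the various $\Theta_{j}$, for instance $0\leqslant\Omega\cdot\Theta=D\cdot\Theta-\sum_{i}a_{i}(\Theta_{i}\cdot\Theta)$ with $D\cdot\Theta\in\{1,2\}$, obtaining inequalities $\alpha a_{1}+\beta a_{2}\leqslant 1$ of the shape required by Theorem~\ref{theorem:Kosta}.

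Next comes the reduction by blowing up, in the style of the proof of Theorem~\ref{theorem:adjunction} and of Remark~\ref{remark:applying-theorem-Kosta}. Let $f\colon\widetilde{S}\to S$ be the blow up of $P$, with exceptional curve $E$ and proper transforms $\widetilde{D}$, $\widetilde{T}_{P}$, $\widetilde{\Omega}$; then $(\widetilde{S},\widetilde{D}+(m-1)E)$ is not log canonical at some point $Q\in E$. Since $\widetilde{T}_{P}=f^{*}T_{P}-2E$ is not a component of $\mathrm{Supp}(\widetilde{D})$ in the irreducible case (and its components behave the same way in the reducible case), $\widetilde{D}\cdot\widetilde{T}_{P}=3-2m\geqslant 0$ gives $m\leqslant\frac{3}{2}$, hence $m-1\leqslant\frac{1}{2}$. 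If $Q$ lies on $\widetilde{T}_{P}$ (which is smooth at $Q$ after this blow up in the nodal and reducible-transversal cases, and after a second blow up in the cuspidal and tacnodal cases), I apply Theorem~\ref{theorem:adjunction} with $\Delta=\widetilde{T}_{P}$ and boundary $(m-1)E+\widetilde{D}$: expanding $\mathrm{mult}_{Q}\big(((m-1)E+\widetilde{D})\cdot\widetilde{T}_{P}\big)>1$, using $(E\cdot\widetilde{T}_{P})_{Q}$, and comparing with the global value $\widetilde{D}\cdot\widetilde{T}_{P}=3-2m$, one gets $2-m<3-2m$, i.e.\ $m<1$, in the nodal case, while in the cuspidal case $(E\cdot\widetilde{T}_{P})_{Q}=2$ makes the contradiction immediate. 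In the reducible cases the same pattern works: after blowing up, the relevant curves through $Q$ acquire negative self-intersection, so the parameters $A,B$ in Theorem~\ref{theorem:Kosta} can be taken at least $2$; feeding in the coefficient bounds $\alpha a_{1}+\beta a_{2}\leqslant 1$ yields $\mathrm{mult}_{Q}(\Omega\cdot\Delta_{1})>M+Aa_{1}-a_{2}$ or $\mathrm{mult}_{Q}(\Omega\cdot\Delta_{2})>N+Ba_{2}-a_{1}$, contradicting the intersection bounds exactly as in Remark~\ref{remark:applying-theorem-Kosta}.

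The hard part, and the main obstacle, is the remaining possibility: $Q$ lies on $E$ but off the proper transform of $T_{P}$, so the singular anticanonical curve no longer sees the center and one must follow it along a chain of infinitely near points $P=P_{1},P_{2},\dots,P_{N}$. I plan to control this descent with four ingredients. (i) If $m_{i}$ denotes the multiplicity at $P_{i}$ of the proper transform of $D$, then $m_{1}\geqslant m_{2}\geqslant\cdots$, because each $m_{i+1}$ is bounded by the intersection of that proper transform with the previous exceptional curve. (ii) Intersecting $D$ with the member of $|-K_{S}|$ through $P_{1},\dots,P_{k}$ (nonempty for $k\leqslant 3$, and generically irreducible and not a component of $\mathrm{Supp}(D)$), and with members of $|-2K_{S}|$ having prescribed behaviour along the chain, yields $m_{1}+m_{2}+m_{3}\leqslant 3$ and similar inequalities, which force the $m_{i}$ small and the chain short. (iii) On a smooth surface, failure of log canonicity at a point forces the multiplicity of the boundary there to exceed $1$ (equivalently, boundary multiplicity $\leqslant 1$ everywhere implies log canonical). (iv) The coefficients of the exceptional divisors stay $\leqslant 1$ throughout the chain, so at the terminal point one applies Theorem~\ref{theorem:adjunction} to the last exceptional curve, or to the proper transform of a curve that the bounds in (ii) have forced through the entire chain, and reaches $m<1$ once again.

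Combining (i)--(iv) for chains that may also fold back onto earlier exceptional curves, and separately tracking the coefficients $a_{i}$ while verifying the (admittedly unpleasant but explicit) hypotheses of Theorem~\ref{theorem:Kosta} in the triangle and Eckardt cases, is where essentially all the work lies; the local inequalities of Theorems~\ref{theorem:adjunction} and \ref{theorem:Kosta} then close every branch. Once the case analysis is complete, $(S,D)$ cannot fail to be log canonical at $P$, which is the assertion.
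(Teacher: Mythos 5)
Your plan diverges from the paper at the outset: here Theorem~\ref{theorem:cubic-surface} is quoted from \cite{CheltsovParkWon2}, and what is actually proved are its two crucial special cases, Lemmas~\ref{lemma:cubic-surface} and \ref{lemma:cubic-surface-2}, either globally (untwisting by Geiser involutions, Segre--Manin style) or locally (Theorem~\ref{theorem:Kosta} for the line-plus-conic case, and the new Theorem~\ref{theorem:main} for the triangle case). Attempting a direct case-by-case proof is legitimate, but as written it has two genuine gaps. First, the branch you yourself call the hard part --- after the blow up of $P$ the non-log-canonical point $Q\in E$ may avoid the proper transform of $T_P$ --- is not proved: since $m>1$ is already known, Theorem~\ref{theorem:adjunction} applied to $E$ gives $m=\widetilde{D}\cdot E>1$ and no contradiction, so everything rests on your ingredients (i)--(iv), which are a programme rather than an argument. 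The pivotal claim (ii), that one can intersect $D$ with irreducible members of $|-K_S|$ (or $|-2K_S|$) passing through the chain $P_1,P_2,\dots$ which are not components of $\mathrm{Supp}(D)$, and that the resulting bounds ``force the chain short'', is unsubstantiated: the hypothesis of the theorem controls only \emph{one} component of $T_P$, not the other anticanonical curves through $P$ with prescribed tangent direction, which may perfectly well be reducible with a line or conic inside $\mathrm{Supp}(D)$; and the final contradiction in (iv) is never computed. This branch is precisely what forces either the inductive local inequalities or the global Geiser untwisting (whose number of steps is not a priori bounded --- compare Example~\ref{example:dP4}); your sketch supplies no substitute.

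Second, in the triangle case your plan is to feed parameters ``$A,B\geqslant 2$'' into Theorem~\ref{theorem:Kosta}. The parameters are already off ($A=-E^2=1$; the paper's local proof of Lemma~\ref{lemma:cubic-surface-2} uses $A=1$, $B=2$, $M=1$, $N=0$, $\alpha=\beta=1$), but the more serious point is that the paper explicitly records that Theorem~\ref{theorem:Kosta} could \emph{not} be made to work for the triangle configuration: Kosta-type inequalities are asymmetric in $\Delta_1,\Delta_2$, whereas Lemma~\ref{lemma:cubic-surface} is symmetric in the two lines through $P$, and this is exactly why Theorem~\ref{theorem:main} was introduced. So ``verifying the hypotheses of Theorem~\ref{theorem:Kosta} in the triangle case'' is the step that fails, not a routine check. (The cases you can close --- Eckardt points, a missing component through $P$ giving $1=D\cdot L\geqslant\mathrm{mult}_P(D)$, and $Q$ on the proper transform of a nodal or cuspidal $T_P$ --- are correct but are the easy part.) To complete your approach you would need either Theorem~\ref{theorem:main} (which you never invoke) or the global involution argument for the symmetric cases, and a genuine treatment, not a sketch, of the escaping-center case for irreducible and tangent $T_P$.
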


Siberian tigers on smooth del Pezzo surfaces of degree $1$ and $2$
are also just anticanonical curves that have non-log canonical
singularities (see \cite[Theorem 1.12]{CheltsovParkWon2}). This
follows easily from the proofs of
\cite[Lemmas~3.1~and~3.5]{Ch07a}. Surprisingly, smooth del Pezzo
surfaces of degree $4$ contains much more  Siberian tigers.

\begin{example}
\label{example:dP4} Let $S$ be a smooth complete intersection of
two quadric hypersurfaces in $\mathbb{P}^4$, let $L$ be a line on
$S$, and let $P_0$ be a point in $L$ such that $L$ is the only
line in $S$ that passes though $P_0$. Then there exists exactly
five conics in $S$ that pass through $P_0$. Let me denote them by
$C_1^0$, $C_2^0$, $C_3^0$, $C_4^0$ and $C_5^0$. Then
$$
\frac{\sum_{i=1}^{5}C_i^0}{3}+\frac{2}{3}L\sim_{\mathbb{Q}}-K_{S},
$$
is a  Siberian tiger. Let $Z$ be a general smooth rational cubic
curve in $S$ such that $Z+L$ is cut out by a hyperplane section
and $P\in Z$. Then $Z\cap L$ consists of a point $P$ and another
point which I denote by $Q$. Let $f\colon \tilde{S}\to S$ be a
blow up of the point $Q$, and let $E$ be its exceptional curve.
Denote by $\tilde{L}$ and $\tilde{Z}$ the proper transforms of the
curves $L$ and $Z$ on the surface $\tilde{S}$, respectively. Then
$\tilde{Z}\cap\tilde{L}=\varnothing$. Let $g\colon\hat{S}\to
\tilde{S}$ be the blow up of the point $\tilde{Z}\cap E$, and let
$F$ be its exceptional curve. Denote by $\hat{E}$, $\hat{L}$ and
$\hat{Z}$ the proper transforms of the curves $E$, $\tilde{L}$ and
$\tilde{Z}$ on the surface $\hat{S}$, respectively. Then $\hat{S}$
is a minimal resolution of a singular del Pezzo surface of degree
$2$, and $|-K_{\hat{S}}|$ gives a morphism
$\hat{S}\to\mathbb{P}^2$ that is a double cover away from the
curves $\hat{E}$ and $\hat{L}$. This double cover induces an
involution $\tau\in\mathrm{Bir}(S)$. Put $C_i^1=\tau(C_i^0)$ for
every $i$. Then $C_1^1$, $C_2^1$, $C_3^1$, $C_4^1$ and $C_5^1$ are
curves of degree $5$ that all intersect exactly in one point in
$L$. Denote this point by $P_1$. Iterate this constriction $k$
times. This gives me five irreducible curves $C_1^k$, $C_2^k$,
$C_3^k$, $C_4^k$ and $C_5^k$ that intersect exactly in one point
$P_k$. Then
\begin{equation}
\label{equation:tiger}
\frac{\sum_{i=1}^{5}C_i^k}{a_{2k+1}+a_{2k+3}}+\frac{4a_{2k+1}-a_{2k+3}}{a_{2k+1}+a_{2k+3}}L\sim_{\mathbb{Q}}-K_{S},
\end{equation}
where $a_i$ is the $i$-th Fibonacci number. Moreover, each curve
$C_i^k$ is a curve of degree $a_{2k+3}$. Furthermore, the log
canonical threshold of the divisor ~\eqref{equation:tiger} is
$$
\frac{a_{2k+3}(a_{2k+1}+a_{2k+3})}{1+a_{2k+3}(a_{2k+1}+a_{2k+3})}<1,
$$
which easily implies that the divisor~\eqref{equation:tiger} is a
 Siberian tiger.
\end{example}

Quite surprisingly, Theorem~\ref{theorem:cubic-surface} has other
applications as well. For example, it follows from
\cite[Corollary~2.12]{KPZ12a}, \cite[Lemma~1.10]{CheltsovParkWon2}
and Theorem~\ref{theorem:cubic-surface} that every cubic cone in
$\mathbb{A}^4$ having unique singular point dooes not admit
non-trivial regular $\mathbb{G}_{a}$-actions (cf.
\cite[Question~2.22]{FZ2003}).

The crucial part in the proof of
Theorem~\ref{theorem:cubic-surface} is played by two sibling
lemmas. The first one is

\begin{lemma}[{\cite[Lemma~4.8]{CheltsovParkWon2}}]
\label{lemma:cubic-surface} Let $S$ be a smooth cubic surface in
$\mathbb{P}^3$, let $P$ be a point in $S$, let $T_P$ be the unique
hyperplane section of the surface $S$ that is singular at $P$, let
$D$ be any effective $\mathbb{Q}$-divisor on the surface $S$ such
that $D\sim_{\mathbb{Q}} -K_{S}$. Suppose that $T_{P}$ consists of
three lines such that one of them does not pass through $P$. Then
$(S,D)$ is log canonical at $P$.
\end{lemma}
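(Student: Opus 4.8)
The plan is to argue by contradiction, combining intersection theory on the cubic surface with the two local inequalities of Theorems~\ref{theorem:adjunction} and~\ref{theorem:Kosta}. Suppose $(S,D)$ is not log canonical at $P$; since $P$ is a smooth point of $S$ this forces $\mathrm{mult}_P(D)>1$. The curves $\ell_1,\ell_2$ are the only two lines of $S$ through $P$, because a further line through $P$ would span the tangent plane $T_P$ together with $\ell_1$ and hence coincide with $\ell_2$ or $\ell_3$, while $\ell_3\not\ni P$. If $\ell_i\not\subset\mathrm{Supp}(D)$ for $i=1$ or $2$, then $\mathrm{mult}_P(D)\leqslant D\cdot\ell_i=-K_S\cdot\ell_i=1$, which is absurd; so $\ell_1,\ell_2\subset\mathrm{Supp}(D)$ and I write $D=a_1\ell_1+a_2\ell_2+\Omega$ with $a_1,a_2>0$ and $\Omega$ effective, $\ell_1,\ell_2\not\subset\mathrm{Supp}(\Omega)$. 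Intersecting $D$ with lines meeting $\ell_i$ but not contained in $\mathrm{Supp}(D)$ gives $a_1,a_2\leqslant 1$. The triangle $T_P=\ell_1+\ell_2+\ell_3$ has an ordinary node at $P$, so $(S,T_P)$ is log canonical at $P$; since log canonicity is convex, the pair $(S,\tfrac{1}{1-t}(D-tT_P))$ is again not log canonical at $P$ for all $t$ for which this divisor is effective, and increasing $t$ until a component of $T_P$ leaves the support either produces a contradiction (if that component is $\ell_1$ or $\ell_2$) or reduces us to the case $\ell_3\not\subset\mathrm{Supp}(D)$. In that case $1=D\cdot\ell_3=a_1+a_2+\Omega\cdot\ell_3$ yields $a_1+a_2\leqslant 1$, hence $a_1,a_2<1$, and $\mathrm{mult}_P(\Omega)\leqslant\Omega\cdot\ell_i$ gives $m:=\mathrm{mult}_P(\Omega)\leqslant 1-|a_1-a_2|$.

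Next I blow up $P$: let $f\colon\tilde S\to S$ be this blow up, $E$ the exceptional curve, and $\tilde\ell_1,\tilde\ell_2,\tilde\Omega$ the proper transforms, so that $\tilde\ell_i^2=-2$, $\tilde\ell_1\cdot\tilde\ell_2=0$, $\tilde\ell_i\cdot E=1$, $E^2=-1$. The log pull back $(\tilde S,a_1\tilde\ell_1+a_2\tilde\ell_2+cE+\tilde\Omega)$, with $c=\mathrm{mult}_P(D)-1=a_1+a_2+m-1\leqslant a_1+a_2\leqslant 1$, is not log canonical at some point $Q\in E$. If $Q$ lies on neither $\tilde\ell_1$ nor $\tilde\ell_2$, then $(\tilde S,cE+\tilde\Omega)$ is not log canonical at $Q$ with $c\leqslant 1$, so $\mathrm{mult}_Q(\tilde\Omega\cdot E)>1$ by Theorem~\ref{theorem:adjunction}, contradicting $\tilde\Omega\cdot E=m\leqslant 1$. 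Thus, after relabelling, $Q=\tilde\ell_1\cap E$. Applying Theorem~\ref{theorem:adjunction} at $Q$ first to $E$ (coefficient $c<1$, residual divisor $a_1\tilde\ell_1+\tilde\Omega$) and then to $\tilde\ell_1$ (coefficient $a_1<1$, residual divisor $cE+\tilde\Omega$), and using $\tilde\Omega\cdot E=m$ together with $\tilde\Omega\cdot\tilde\ell_1=\Omega\cdot\ell_1-m=1+a_1-a_2-m$, one gets $m>1-a_1$ and, after substituting $c=a_1+a_2+m-1$, the decisive inequality $a_1>\tfrac12$; hence $a_2<\tfrac12<a_1$ and $c\leqslant 2a_2$.

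Now I apply Theorem~\ref{theorem:Kosta} to $(\tilde S,cE+a_1\tilde\ell_1+\tilde\Omega)$ at $Q$ with $\Delta_1=E$, $\Delta_2=\tilde\ell_1$, so that $A=-E^2=1$, $B=-\tilde\ell_1^2=2$, $M=(cE+a_1\tilde\ell_1+\tilde\Omega)\cdot E=1-a_2$ and $N=(cE+a_1\tilde\ell_1+\tilde\Omega)\cdot\tilde\ell_1=0$. Here $A(B-1)=1$, $M\leqslant 1$, $N\leqslant 1$, and $2M+AN=2-2a_2\leqslant 2$, so the final ``or''-hypothesis holds automatically; the remaining conditions $\alpha(A+M-1)\geqslant A^2(B+N-1)\beta$ and $\alpha(1-M)+A\beta\geqslant A$ reduce to $\alpha(1-a_2)\geqslant\beta$ and $\alpha a_2+\beta\geqslant 1$, and it remains to exhibit $\alpha,\beta\geqslant 0$ also satisfying $\alpha c+\beta a_1\leqslant 1$. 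Granting this, Theorem~\ref{theorem:Kosta} gives $\mathrm{mult}_Q(\tilde\Omega\cdot E)>M+Ac-a_1=m$ or $\mathrm{mult}_Q(\tilde\Omega\cdot\tilde\ell_1)>N+Ba_1-c=1+a_1-a_2-m$, and both are impossible because $\mathrm{mult}_Q(\tilde\Omega\cdot E)\leqslant\tilde\Omega\cdot E=m$ and $\mathrm{mult}_Q(\tilde\Omega\cdot\tilde\ell_1)\leqslant\tilde\Omega\cdot\tilde\ell_1=1+a_1-a_2-m$. This is the desired contradiction.

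The step I expect to be the real obstacle is the feasibility just invoked: the system for $(\alpha,\beta)$ is solvable exactly when $c+a_1(1-a_2)\leqslant 1$, and this can fail on the boundary of the region cut out by $a_1+a_2\leqslant 1$, $c\leqslant 2a_2$, $m>1-a_1$ — for instance when $a_1=1-a_2$ and $m=1+a_2-a_1$. Such borderline configurations are precisely those in which $\Omega$ is highly tangent to $\ell_1$ at $P$, and to treat them one must iterate: blow up $Q$ as well (so that the proper transform of $\ell_1$ acquires self-intersection $-3$, then $-4,\dots$), re-apply Theorem~\ref{theorem:Kosta} with $B=k+1$ at the $k$-th stage, and at each stage extract from non-log-canonicity the sharper lower bound on $a_1$ that the previous stage makes available, until the global bound $\Omega\cdot\ell_1=1+a_1-a_2$ is exceeded by the accumulated local intersection multiplicities of $\Omega$ with $\ell_1$ along the resolution. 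Equivalently, one runs by hand the inductive argument that Theorem~\ref{theorem:Kosta} packages, in the spirit of the proof of Theorem~\ref{theorem:adjunction}; the subcases in which the non-log-canonical centre escapes to a free point of an exceptional curve are disposed of by adjunction along that curve, as in the second paragraph. Beyond the intersection numbers of the $27$ lines and of $\pm K_S$, no further geometry of the cubic surface is needed.
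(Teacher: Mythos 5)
Your reduction is correct as far as it goes, and it parallels the setup of the paper's local proof: the convexity trick with $T_P$ to assume $\ell_3\not\subset\mathrm{Supp}(D)$, the bounds $a_1+a_2\leqslant 1$ and $m\leqslant 1-|a_1-a_2|$, the blow-up of $P$, the location $Q=\tilde\ell_1\cap E$ via Theorem~\ref{theorem:adjunction}, and the consequences $m>1-a_1$, $a_1>\tfrac12$ are all sound. The genuine gap is the one you flag yourself: applying Theorem~\ref{theorem:Kosta} at $Q$ with $A=1$, $B=2$, $M=1-a_2$, $N=0$ requires $\alpha,\beta\geqslant 0$ with $\alpha(1-a_2)\geqslant\beta$, $\alpha a_2+\beta\geqslant 1$, $\alpha c+\beta a_1\leqslant 1$, which is feasible exactly when $c+a_1(1-a_2)\leqslant 1$, and this fails on an open region of the numerical data allowed by your constraints, not just on a boundary (e.g. $a_1=\tfrac35$, $a_2=\tfrac25$, $m=\tfrac45$, so $c=\tfrac45$ and $c+a_1(1-a_2)=\tfrac{29}{25}>1$). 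Your proposed repair --- iterate blow-ups and re-apply Theorem~\ref{theorem:Kosta} with $B=k+1$ --- is only a sketch: after the second blow-up the non-log-canonical centre may sit at the satellite point where the new exceptional curve meets the old one, away from the proper transform of $\ell_1$, so the accumulation of local intersections with $\ell_1$ on which your termination criterion rests simply stops; and nothing is offered to show that the Kosta-type hypotheses, which already failed at the first stage, become verifiable at later stages. This is precisely the obstruction the paper reports: Theorem~\ref{theorem:Kosta} and its proof are asymmetric in $\Delta_1,\Delta_2$, while the configuration of the two lines through $P$ is symmetric, and the author states he could not localize this lemma by means of Theorem~\ref{theorem:Kosta}; that failure is the motivation for Theorem~\ref{theorem:main}.

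For comparison, the paper's local proof needs no blow-up at all: after the same reduction to $D=aL+bM+\Omega$ with $c=0$, $a+b\leqslant 1$ and $m\leqslant 1$, Theorem~\ref{theorem:main} applied directly at $P$ with $\Delta_1=L$, $\Delta_2=M$ gives $1+a-b=\mathrm{mult}_{P}(\Omega\cdot L)>2(1-b)$ or $1-a+b=\mathrm{mult}_{P}(\Omega\cdot M)>2(1-a)$, and either alternative yields $a+b>1$, contradicting $1=N\cdot D\geqslant a+b$. (The paper also records a global proof via Geiser involutions.) So the missing ingredient in your route is exactly a symmetric local inequality of the type of Theorem~\ref{theorem:main}; to make your approach complete you would either have to invoke it, or actually carry out and control the iterated blow-up analysis that Theorem~\ref{theorem:Kosta} does not package for this symmetric situation.
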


Its younger sister is

\begin{lemma}[{\cite[Lemma~4.9]{CheltsovParkWon2}}]
\label{lemma:cubic-surface-2}  Let $S$ be a smooth cubic surface
in $\mathbb{P}^3$, let $P$ be a point in $S$, let $T_P$ be the
unique hyperplane section of the surface $S$ that is singular at
$P$, let $D$ be any effective $\mathbb{Q}$-divisor on the surface
$S$ such that $D\sim_{\mathbb{Q}} -K_{S}$.  Suppose that $T_{P}$
consists of a line and a conic intersecting transversally. Then
$(S,D)$ is log canonical at $P$.
\end{lemma}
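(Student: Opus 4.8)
The plan is to assume, for contradiction, that $(S,D)$ is not log canonical at $P$, and then to analyse the situation obtained after blowing up $P$ and possibly further infinitely near points, playing the local inequalities of Theorems~\ref{theorem:adjunction} and~\ref{theorem:Kosta} against the global intersection numbers. Write $H=-K_{S}$. Since $T_{P}=L+C\sim H$ we have $L^{2}=-1$, $C^{2}=0$, $L\cdot C=2$, $D\cdot L=1$ and $D\cdot C=2$; moreover $L$ and $C$ meet transversally at $P$ and at exactly one further point $P'$, and the nodal curve $L+C$ is log canonical. First the easy reduction: if $L\not\subset\mathrm{Supp}(D)$ then $\mathrm{mult}_{P}(D)\leqslant\mathrm{mult}_{P}(D\cdot L)\leqslant D\cdot L=1$, so $(S,D)$ is log canonical at $P$ because $S$ is smooth, a contradiction. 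Hence $L\subset\mathrm{Supp}(D)$, and I write $D=aL+bC+\Omega$ with $a=\mathrm{mult}_{L}(D)>0$, $b=\mathrm{mult}_{C}(D)\geqslant 0$ and $\Omega$ effective with $L,C\not\subset\mathrm{Supp}(\Omega)$. Intersecting with $C$ and with $L$ gives $\Omega\cdot C=2-2a\geqslant 0$ and $\Omega\cdot L=1+a-2b\geqslant 0$, hence $a\leqslant 1$, $b\leqslant 1$, $m:=\mathrm{mult}_{P}(\Omega)\leqslant\min\{2-2a,\,1+a-2b\}$, while $\mathrm{mult}_{P}(D)=a+b+m>1$ (again because $S$ is smooth).

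Next I blow up $P$: let $f\colon\tilde S\to S$ have exceptional curve $E$, and write $\tilde L,\tilde C,\tilde\Omega$ for proper transforms. Then $\tilde L^{2}=-2$, $\tilde C^{2}=E^{2}=-1$, $\tilde L\cdot\tilde C=1$ (they still meet over $P'$), $\tilde L\cdot E=\tilde C\cdot E=1$, $\tilde\Omega\cdot E=m$, $\tilde\Omega\cdot\tilde L=1+a-2b-m$, and, setting $c:=a+b+m-1$, one has $a\tilde L+b\tilde C+cE+\tilde\Omega=f^{*}D-E$ and the pair $(\tilde S,\,a\tilde L+b\tilde C+cE+\tilde\Omega)$ is not log canonical at some $Q\in E$. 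Here $c\in(0,1]$: indeed $c\leqslant 0$ would contradict $\mathrm{mult}_{P}(D)>1$, and $c>1$, i.e.\ $\mathrm{mult}_{P}(D)>2$, would force $b>a$ (from $m\leqslant 2-2a$) and $2a-b>1$ (from $m\leqslant 1+a-2b$), hence $a>1$. The mechanism I would lean on is that the latest exceptional curve always has self-intersection $-1$, so it can serve as $\Delta_{1}$ in Theorem~\ref{theorem:Kosta} (giving $A=1$), paired with a proper transform of $L$ or $C$ of self-intersection $\leqslant-2$ as $\Delta_{2}$ (giving $B\geqslant 2$, hence $A(B-1)\geqslant 1$); one then checks that the clean identity ``$\text{boundary}=f^{*}D-E$'' keeps the parameters $M,N$ small enough.

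Concretely, when $Q=\tilde L\cap E$ I apply Theorem~\ref{theorem:Kosta} on $\tilde S$ with $\Delta_{1}=E$, $\Delta_{2}=\tilde L$, $a_{1}=c$, $a_{2}=a$ and $\Omega_{\mathrm{thm}}=b\tilde C+\tilde\Omega$. Since the whole boundary is $f^{*}D-E$ one computes $M=(f^{*}D-E)\cdot E=1$ and $N=(f^{*}D-E)\cdot\tilde L=D\cdot L-1=0$, so with $A=1$, $B=2$ the hypotheses of Theorem~\ref{theorem:Kosta} reduce to the system $\beta\geqslant 1$, $\alpha\geqslant\beta$, $\alpha c+\beta a\leqslant 1$, which is solvable precisely when $a+c\leqslant 1$; and in that case its conclusion forces $\mathrm{mult}_{Q}(\Omega_{\mathrm{thm}}\cdot E)>b+m$ or $\mathrm{mult}_{Q}(\Omega_{\mathrm{thm}}\cdot\tilde L)>1+a-b-m$, both impossible — the first because $\Omega_{\mathrm{thm}}\cdot E=b+m$, the second because $\Omega_{\mathrm{thm}}\cdot\tilde L=1+a-b-m$ while, as $\tilde C\not\ni Q$, $\mathrm{mult}_{Q}$ of that cycle is at most $\tilde\Omega\cdot\tilde L=1+a-2b-m\leqslant 1+a-b-m$. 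When $Q$ is a general point of $E$, Theorem~\ref{theorem:adjunction} applied to $E$ gives $m=\tilde\Omega\cdot E\geqslant\mathrm{mult}_{Q}(\tilde\Omega\cdot E)>1$, which together with $m\leqslant 2-2a$ and $m\leqslant 1+a-2b$ confines $a,b$ to a tiny range; when $Q=\tilde C\cap E$, one further blow-up makes the proper transforms of $C$ and $E$ into disjoint $(-2)$-curves and returns us to the preceding configuration. In the remaining subcases — chiefly $Q=\tilde L\cap E$ with $a+c>1$ — I blow up $Q$ and repeat: the new exceptional curve is again a $(-1)$-curve, so the role of $\Delta_{1}$ persists, the older proper transforms of $L$ and $C$ only get more negative, and the multiplicities picked up along the chain are bounded by $\mathrm{mult}_{P}(\Omega\cdot L)\leqslant 1+a-2b$ and $\mathrm{mult}_{P}(\Omega\cdot C)\leqslant 2-2a$, forcing the process to terminate.

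The main obstacle is exactly this bookkeeping: organising the (necessarily finite) case tree so that it terminates, and verifying at each terminal node the remaining numerical inequalities of Theorem~\ref{theorem:Kosta} against the accumulated intersection data — the subcases in which the exceptional coefficient $c$, together with $a$, is large being the genuinely delicate ones. A cleaner alternative that I would keep in reserve is to contract the $(-1)$-curve $L$ by $p\colon S\to\bar S$: this realises $\bar S$ as a quartic del Pezzo surface, carries $T_{P}$ to an irreducible nodal anticanonical curve with node $\bar P=p(L)$, and — because $\mathrm{mult}_{\bar P}(p_{*}\Omega)=\Omega\cdot L=1+a$ exactly — turns ``$(S,D)$ is log canonical along $L$'' into ``$(\bar S,p_{*}D)$ is log canonical at $\bar P$'', reducing the lemma to a statement on a quartic del Pezzo surface, where the several pencils of conics through $\bar P$ provide extra leverage.
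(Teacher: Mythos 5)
Your overall strategy (blow up $P$, locate the non-log-canonical point $Q$ on $E$, and play Theorem~\ref{theorem:Kosta} with $\Delta_1=E$, $\Delta_2=\tilde L$, $A=1$, $B=2$, $M=1$, $N=0$ against the intersection numbers) is the paper's local proof, but you omit its one essential preparatory move, and that omission is where your argument breaks. The paper first replaces $D$ by $(1+\mu)D-\mu T_P$ for a suitable $\mu\geqslant 0$, which costs nothing (the new divisor is still effective, $\mathbb{Q}$-linearly equivalent to $-K_S$, and still non-log-canonical at $P$) and lets one assume the coefficient of the conic is zero, $b=0$. With $b=0$ the constraint $2=C\cdot D\geqslant 2a+m$ gives exactly $a_1+a_2=(a+m-1)+a\leqslant 1$, so $\alpha=\beta=1$ verifies all hypotheses of Theorem~\ref{theorem:Kosta} and the contradiction comes after a single blow-up. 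You keep $b>0$, and then your check of the hypothesis $\alpha a_1+\beta a_2\leqslant 1$ only goes through when $2a+b+m\leqslant 2$. That is not automatic: for instance $a=9/10$, $b=1/2$, $m$ slightly below $1/5$ satisfies every constraint you derive ($a\leqslant 1$, $m\leqslant 2-2a$, $m\leqslant 1+a-2b$, $a+b+m>1$) while $2a+b+m>2$. Precisely this regime you defer to ``blow up $Q$ and repeat,'' with no verification of the hypotheses of Theorem~\ref{theorem:Kosta} along the chain and no termination measure; that deferred bookkeeping is the actual proof, not a detail, so as written the argument is incomplete exactly where it is hard. (It is also the reason the paper's global proof needs the Geiser involution and a decreasing degree to terminate.)

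The location of $Q$ is also not dispatched. At a general point $Q\in E$, Theorem~\ref{theorem:adjunction} gives only $m>1$, and $m>1$ is perfectly compatible with your inequalities $m\leqslant 2-2a$ and $m\leqslant 1+a-2b$ (take $a=2/5$, $b=0$, $m=11/10$); so ``confines $a,b$ to a tiny range'' is not a contradiction. The paper excludes this case by a separate input on the anticanonical geometry of the blown-up surface (it cites \cite[Lemma~3.2]{CheltsovParkWon2}, resp.\ argues as in \cite[Lemma~3.5]{Ch07a}), and your proposal supplies nothing in its place. Likewise, for $Q=\tilde C\cap E$ your ``one further blow-up returns us to the preceding configuration'' is unsubstantiated, whereas after the reduction $b=0$ this case dies instantly: adjunction along $\tilde C$ would give $(a+m-1)+(2-2a-m)>1$, i.e.\ $a<0$, contradicting $a>0$ (without the reduction it only gives $b>a$, which you cannot rule out). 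The fallback of contracting $L$ to a quartic del Pezzo surface is only a sketch. In short: insert the $(1+\mu)D-\mu T_P$ reduction at the start and justify $Q=\tilde L\cap E$, and your single application of Theorem~\ref{theorem:Kosta} becomes exactly the paper's local proof; without these two ingredients the proposal has genuine gaps.
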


The proofs of Lemmas~\ref{lemma:cubic-surface} and
\ref{lemma:cubic-surface-2} we found in \cite{CheltsovParkWon2}
are \emph{global}. In fact, they resemble the proofs of classical
results by Segre and Manin on cubic surfaces (see
\cite[Theorems~2.1~and~2.2]{CoKoSm}). Once the paper
\cite{CheltsovParkWon2} has been written, I asked myself a
question: can I prove Lemmas~\ref{lemma:cubic-surface} and
\ref{lemma:cubic-surface-2} using just \emph{local} technique? To
answer this question, let me sketch their \emph{global} proofs
first.

\begin{proof}[Global proof of Lemma~\ref{lemma:cubic-surface}]
Let me use the notation and assumptions of
Lemma~\ref{lemma:cubic-surface}. I write $T_P=L+M+N$, where $L$,
$M$, and $N$ are lines on the cubic surface $S$. Without loss of
generality, I may assume that the line $N$ does not pass through
the point $P$. Let $D$ be any effective $\mathbb{Q}$-divisor on
the surface $S$ such that $D\sim_{\mathbb{Q}} -K_{S}$. I must show
that  $(S,D)$ is log canonical at $P$. Suppose that the log pair
$(S,D)$ is not log canonical at  the point $P$. Let me seek for a
contradiction.

Put $D=aL+bM+cN+\Omega$, where $a$, $b$, and $c$ are non-negative
rational numbers and $\Omega$ is an effective $\mathbb{Q}$-divisor
on $S$ whose support contains none of the lines $L$, $M$ and $N$.
Put $m=\mathrm{mult}_P(\Omega)$. Then $a\leqslant 1$, $b\leqslant
1$ and $c\leqslant 1$. Moreover, the pair $(S,D)$ is log canonical
outside finitely many points. This follows from
\cite[Lemma~5.3.6]{CoKoSm} and is very easy to prove (see, for
example, \cite[Lemma~4.1]{CheltsovParkWon2} or the proof of
\cite[Lemma~3.4]{Ch07a}).

Since $(S,D)$ is not log canonical at the point $P$, I have
$$
m+a+b=\mathrm{mult}_{P}(D)>1
$$
by \cite[Excercise~6.18]{CoKoSm} (this also follows from
Theorem~\ref{theorem:adjunction}). In particular, the rational
number $a$ must be positive, since otherwise I would have
$$
1=L\cdot D\geqslant\mathrm{mult}_{P}(D)>1.
$$
Similarly, the
rational number $b$ must be positive as well.

The inequality $m+a+b>1$ is very handy. However, a stronger
inequality $m+a+b>c+1$ holds. Indeed, there exists a non-negative
rational number $\mu$ such that the divisor $(1+\mu)D-\mu T_P$ is
effective and its support does not contain at least one components
of $T_P$. Now to obtain $m+a+b>c+1$, it is enough to apply
\cite[Excercise~6.18]{CoKoSm} to the divisor $(1+\mu)D-\mu T_P$,
since $(S, (1+\mu)D-\mu T_P)$ is not log canonical at $P$.

Since $a$, $b$, $c$ do not exceed $1$ and $(S, L+M+N)$ is log
canonical, $\Omega\ne 0$. Let me write
$\Omega=\sum_{i=1}^{r}e_{i}C_{i}$, where every $e_{i}$ is a
positive rational number, and every $C_{i}$ is an irreducible
reduced curve of degree $d_{i}>0$ on the surface $S$. Then
$$
a+b+c+\sum_{i=1}^{r}e_{i}d_{i}=3,
$$
since $-K_{S}\cdot D=3$.

Let $f\colon\tilde{S}\to S$ be a blow up of the point $P$, and let
$E$ be the exceptional divisor of $f$. Denote by $\tilde{L}$,
$\tilde{M}$ and $\tilde{N}$ the proper transforms on $\tilde{S}$
of the lines $L$, $M$ and $N$, respectively. For each $i$, denote
by $\tilde{C}_{i}$ the proper transform of the curve $C_{i}$ on
the surface $\tilde{S}$. Then
$$
K_{\tilde{S}}+a\tilde{L}+b\tilde{M}+c\tilde{N}+\left(a+b+m-1\right)E+\sum_{i=1}^{r}e_{i}\tilde{C}_{i}=f^{*}\left(K_{S}+D\right),
$$
which implies that the log pair $(\tilde{S},
a\tilde{L}+b\tilde{M}+c\tilde{N}+(a+b+m-1)E+\sum_{i=1}^{r}e_{i}\tilde{C}_{i})$
is not log canonical at some point $Q\in E$.

I claim that either $Q\in\tilde{L}\cap E$ or $Q\in\tilde{M}\cap
E$. Indeed, it follows from
$$
\left\{\aligned%
&1=D\cdot L=\Big(aL+bM+cN+\Omega\Big)\cdot L=-a+b+c+\Omega\cdot L\geqslant -a+b+c+m,\\
&1=D\cdot M=\Big(aL+bM+cN+\Omega\Big)\cdot M=a-b+c+\Omega\cdot M\geqslant a-b+c+m,\\
&1=D\cdot N=\Big(aL+bM+cN+\Omega\Big)\cdot N=a+b-c+\Omega\cdot N\geqslant a+b-c,\\
\endaligned
\right.
$$
that $m\leqslant 1-c$ and $a+b+m-1\leqslant 1$, because
$a\leqslant 1$ and $b\leqslant 1$. On the other hand, if
$Q\not\in\tilde{L}\cup\tilde{M}$, then the log pair $(\tilde{S},
\left(a+b+m-1\right)E+\sum_{i=1}^{r}e_{i}\tilde{C}_{i})$ is not
log canonical at $Q$, which implies that
$$
m=\Big(\sum_{i=1}^{r}e_{i}\tilde{C}_{i}\Big)\cdot E>1
$$
by Theorem~\ref{theorem:adjunction}. This shows that either
$Q\in\tilde{L}\cap E$ or $Q\in\tilde{M}\cap E$, since $m\leqslant
1-c\leqslant 1$. Without loss of generality, I may assume that
$Q=\tilde{L}\cap E$.

Let $\rho\colon S\dasharrow\mathbb{P}^2$ be the linear projection
from the point $P$. Then $\rho$ is a generically two-to-one
rational map. Thus the map $\rho$ induces an involution
$\tau\in\mathrm{Bir}(S)$ known as the Geiser involution (see
\cite[\S~2.14]{CoKoSm}). The involution $\tau$ is biregular
outside $P\cup N$, $\tau(L)=L$ and $\tau(M)=M$.

 For each $i$, denote by $\hat{d}_{i}$
the degree of the curve $\tau(C_{i})$. Put
$\hat{\Omega}=\sum_{i=1}^re_i\tau(C_{i})$. Then
$$
aL+bM+(a+b+m-1)N+\hat{\Omega}\sim_{\mathbb{Q}}-K_{S},
$$
and $(S,aL+bM+(a+b+m-1)M+\hat{\Omega})$ is not log canonical at
the point $L\cap N$. Thus, I can replace the original effective
$\mathbb{Q}$-divisor $D$ by the divisor
$$
aL+bM+(a+b+m-1)N+\hat{\Omega}\sim_{\mathbb{Q}} -K_{S}
$$
that has the same properties as $D$. Moreover, I have
$$
\sum_{i=1}^{r}e_i\hat{d}_{i}<\sum_{i=1}^{r}e_id_{i},
$$
since $m+a+b>c+1$. Iterating this process, I obtain a
contradiction after finitely many steps.
\end{proof}

\begin{proof}[Global proof of Lemma~\ref{lemma:cubic-surface-2}]
Let me use the notations and assumptions of
Lemma~\ref{lemma:cubic-surface-2}. I write $T_P=L+C$, where $L$ is
a line, and $C$ is a conic. Let $D$ be any effective
$\mathbb{Q}$-divisor on the surface $S$ such that
$D\sim_{\mathbb{Q}} -K_{S}$. I must show that the log pair $(S,D)$
is log canonical at $P$. Suppose that $(S,D)$ is not log canonical
at the point $P$. Let me seek for a contradiction.

Let me write $D=nL+kC+\Omega$, where $n$ and $k$ are non-negative
rational numbers and $\Omega$ is an effective $\mathbb{Q}$-divisor
on $S$ whose support contains none of the curves $L$ and $C$. Put
$m=\mathrm{mult}_P(\Omega)$. Then $2n+m\leqslant 2$ and
$2k+m\leqslant 1+n$, since
$$
\left\{\aligned%
&1=D\cdot L=\Big(nL+kC+\Omega\Big)\cdot L=-n+2k+\Omega\cdot L\geqslant -n+2k+m,\\
&2=D\cdot C=\Big(nL+kC+\Omega\Big)\cdot C=2n+\Omega\cdot C\geqslant 2n+m.\\
\endaligned
\right.
$$

Arguing as in the proof \cite[Lemma~3.4]{Ch07a}, I see that the
log pair $(S,D)$ is log canonical outside finitely many points
(this follows, for example, from \cite[Lemma~5.3.6]{CoKoSm}). In
particular, both rational numbers $n$ and $k$ do not exceed $1$.
On the other hand, it follows from \cite[Excercise~6.18]{CoKoSm}
that
$$
m+n+k=\mathrm{mult}_{P}(D)>1,
$$
because the log pair $(S,D)$ is not log canonical at the point
$P$. The later implies that $n>0$, since $1=L\cdot
D\geqslant\mathrm{mult}_{P}(D)$ if $n=0$.

I claim that $n>k$ and $m+n>1$. Indeed, there exists a
non-negative rational number $\mu$ such that the divisor
$(1+\mu)D-\mu T_P$ is effective and its support does not contain
at least one components of $T_P$. Then $(S, (1+\mu)D-\mu T_P)$ is
not log canonical at $P$. If $n\leqslant k$, then the support of
$(1+\mu)D-\mu T_P$ does not contain $L$, which is impossible,
since
$$
\mathrm{mult}_P\Big((1+\mu)D-\mu T_P\Big)>1
$$
and $1=L\cdot((1+\mu)D-\mu T_P)$. Thus, I proved that $n>k$. Now I
can apply \cite[Excercise~6.18]{CoKoSm} to the divisor
$(1+\mu)D-\mu T_P$ and obtain $m+n>1$.

Let $f\colon \tilde{S}\to S$ be the blow up of the point $P$, let
$\tilde{\Omega}$ be the proper transform of the divisor $\Omega$
on the surface $\tilde{S}$, let $\tilde{L}$ be the proper
transform of the line $L$ on the surface $\tilde{S}$, let
$\tilde{C}$ be the proper transform of the conic $C$ on the
surface $\tilde{S}$, and let $E$ be the $f$-exceptional curve.
Then
$$
K_{\tilde{S}}+n\tilde{L}+k\tilde{C}+\tilde{\Omega}+\big(n+k+m-1\big)E\sim_{\mathbb{Q}}f^{*}\big(K_{S}+D\big)\sim_{\mathbb{Q}} 0,%
$$
which implies that the log pair $(\tilde{S},
n\tilde{L}+k\tilde{C}+(n+k+m-1)E+\tilde{\Omega})$ is not log
canonical at some point $Q\in E$. One the other hand, I must have
$n+k+m-1\leqslant 1$, because $2n+m\leqslant 2$, $2k+m\leqslant
1+n$ and $n\leqslant 1$.

I claim that $Q\in\tilde{L}$. Indeed, if $Q\in\tilde{C}$, then the
log pair $(\tilde{S}, k\tilde{C}+(n+k+m-1)E+\tilde{\Omega})$ is
not log canonical at $Q$, which implies that $k>n$, since
$$
1-n+k=\Big(\tilde{\Omega}+\big(n+k+m-1\big)E\Big)\cdot\tilde{C}>1,
$$
by Theorem~\ref{theorem:adjunction}. Since I proved already that
$n>k$, the curve $\tilde{C}$ does not contain $Q$. Thus, if
$Q\not\in\tilde{L}$, then $Q\not\in\tilde{L}\cup\tilde{C}$, which
contradicts \cite[Lemma~3.2]{CheltsovParkWon2}, since
$$
n\tilde{L}+k\tilde{C}+\tilde{\Omega}+(n+k+m-1)E\sim_{\mathbb{Q}}-K_{\tilde{S}}.
$$

Since $n$ and $k$ do not exceed $1$ and the log pair $(S, L+C)$ is
log canonical, the effective $\mathbb{Q}$-divisor $\Omega$ cannot
be the zero-divisor. Let $r$ be the number of the irreducible
components of the support of the $\mathbb{Q}$-divisor $\Omega$.
Let me write $\Omega=\sum_{i=1}^{r}e_{i}C_{i}$, where every
$e_{i}$ is a positive rational number, and every $C_{i}$ is an
irreducible reduced curve of degree $d_{i}>0$ on the surface $S$.
Then
$$
n+2k+\sum_{i=1}^{r}a_{i}d_i=3,
$$
since $-K_{S}\cdot D=3$.

Let $\rho\colon S\dasharrow\mathbb{P}^2$ be the linear projection
from the point $P$. Then $\rho$ is a generically $2$-to-$1$
rational map. Thus the map $\rho$ induces a birational involution
$\tau$ of the cubic surface $S$. This involution is also known as
the Geiser involution (cf. the proof of
Lemma~\ref{lemma:cubic-surface}). The involution $\tau$ is
biregular outside of the conic $C$, and $\tau(L)=L$.

For every $i$, put $\hat{C}_i=\tau(C_i)$, and denote by
$\hat{d}_i$ the degree of the curve $\hat{C}_i$. Put
$\hat{\Omega}=\sum_{i=1}^re_i\hat{C}_{i}$. Then
$$
nL+(n+k+m-1)C+\hat{\Omega}\sim_{\mathbb{Q}}-K_{S},
$$
and $(S,nL+(n+k+m-1)C+\hat{\Omega})$ is not log canonical at the
point $L\cap C$ that is different from $P$. Thus, I can replace
the original effective $\mathbb{Q}$-divisor $D$ by
$nL+(n+k+m-1)C+\hat{\Omega}$ that has the same properties as $D$.
Moreover, since $m+n>1$, the inequality
$$
\sum_{i=1}^{r}e_i\hat{d}_{i}<\sum_{i=1}^{r}e_id_{i}
$$
holds.
Iterating this process, I obtain a contradiction in a finite
number of steps as in the proof of
Lemma~\ref{lemma:cubic-surface}.
\end{proof}

It came as a surprise that Theorem~\ref{theorem:Kosta} can be used
to replace the \emph{global} proof of
Lemma~\ref{lemma:cubic-surface-2} by its \emph{local} counterpart.
Let me show how to do this.

\begin{proof}[{Local proof of Lemma~\ref{lemma:cubic-surface-2}}]
Let me use the assumptions and notation of
Lemma~\ref{lemma:cubic-surface-2}. I write $T_P=L+C$, where $L$ is
a line, and $C$ is a conic. Let $D$ be any effective
$\mathbb{Q}$-divisor on the surface $S$ such that
$D\sim_{\mathbb{Q}} -K_{S}$. I must show that the log pair $(S,D)$
is log canonical at $P$. Suppose that $(S,D)$ is not log canonical
at the point $P$. Let me seek for a contradiction.

Put $D=nL+kC+\Omega$, where $n$ and $k$ are non-negative rational
numbers and $\Omega$ is an effective $\mathbb{Q}$-divisor on $S$
whose support contains none of the curves $L$ and $C$. Put
$m=\mathrm{mult}_P(\Omega)$. Then
$$
m+n+k=\mathrm{mult}_{P}(D)>1,
$$
since $(S,D)$ is not log canonical
at  $P$. The later implies that $n>0$, since $1=L\cdot
D\geqslant\mathrm{mult}_{P}(D)$ if $n=0$.

Replacing $D$ by an effective $\mathbb{Q}$-divisor $(1+\mu)D-\mu
T_P$ for an appropriate $\mu\geqslant 0$, I may assume that $k=0$.
Then $2=C\cdot D=2n+\Omega\cdot C\geqslant 2n+m$. Moreover, the
log pair $(S,D)$ is log canonical outside finitely many points.
The latter follows, for example, from \cite[Lemma~5.3.6]{CoKoSm}
and is very easy to prove (cf. the proof of
\cite[Lemma~3.4]{Ch07a}).

Let $f\colon \tilde{S}\to S$ be the blow up of the point $P$, let
$\tilde{\Omega}$ be the proper transform of the divisor $\Omega$
on the surface $\tilde{S}$, let $\tilde{L}$ be the proper
transform of the line $L$ on the surface $\tilde{S}$, and let $E$
be the $f$-exceptional curve. Then
$$
K_{\tilde{S}}+n\tilde{L}+\tilde{\Omega}+\big(n+m-1\big)E\sim_{\mathbb{Q}}f^{*}\big(K_{S}+D\big)\sim_{\mathbb{Q}} 0,%
$$
which implies that $(\tilde{S},
n\tilde{L}+(n+m-1)E+\tilde{\Omega})$ is not log canonical at some
point $Q\in E$. Arguing as in the proof of
\cite[Lemma~3.5]{Ch07a}, I get $Q=\tilde{L}\cap E$. Now I can
apply Theorem~\ref{theorem:Kosta} to the log pair $(\tilde{S},
n\tilde{L}+(n+m-1)E+\tilde{\Omega})$ at the point $Q$. Put
$\Delta_1=E$, $\Delta_2=\tilde{L}$, $M=1$, $A=1$, $N=0$, $B=2$,
and $\alpha=\beta=1$. Check that all hypotheses of
Theorem~\ref{theorem:Kosta} are satisfied. By
Theorem~\ref{theorem:Kosta}, I have
$$
m=\mathrm{mult}_Q(\tilde{\Omega}\cdot E)>1+(n+m-1)-n=m
$$
or
$1+n-m=\mathrm{mult}_Q(\tilde{\Omega}\cdot\tilde{L})>2n-(n+m-1)=1+n-m
$, which is absurd.
\end{proof}

I tried to apply Theorem~\ref{theorem:Kosta} to find a
\emph{local} proof of Lemma~\ref{lemma:cubic-surface} as well. But
I failed. This is not surprising. Let me explain why. The proof of
Theorem~\ref{theorem:Kosta} is \emph{asymmetric} with respect to
the curves $\Delta_1$ and $\Delta_2$. The \emph{global} proof of
Lemma~\ref{lemma:cubic-surface-2} is also \emph{asymmetric} with
respect to the curves $L$ and $C$. The proof of
Theorem~\ref{theorem:Kosta} is based on uniquely determined
iterations of blow ups: I must keep blowing up the the point of
the proper transform of the curve $\Delta_2$ that dominates the
point $P$. The  \emph{global} proof of
Lemma~\ref{lemma:cubic-surface-2} is based on uniquely determined
composition of Geiser involutions. So,
Lemma~\ref{lemma:cubic-surface-2} can be considered as a
\emph{global} wrap up of a purely \emph{local} special case of
Theorem~\ref{theorem:Kosta}, where the line $L$ plays the role of
the curve $\Delta_2$ in Theorem~\ref{theorem:Kosta}. On the other
hand, Lemma~\ref{lemma:cubic-surface} is \emph{symmetric} with
respect to the lines $L$ and $M$. Moreover, its proof is not
deterministic at all, since the composition of Geiser involutions
in the proof of Lemma~\ref{lemma:cubic-surface} is not uniquely
determined by the initial data, i.e. every time I apply Geiser
involution, I have exactly two possible candidates for the next
one: either I can use the Geiser involution induced by the
projection from $L\cap N$ or I can use the Geiser involution
induced by the projection from $M\cap N$. So, there is a little
hope that Theorem~\ref{theorem:Kosta} can be used to replace the
usage of Geiser involutions in the proof of
Lemma~\ref{lemma:cubic-surface}. Of course, there is a chance that
the proof of Lemma~\ref{lemma:cubic-surface} can not be
\emph{localized} like the proof of
Lemma~\ref{lemma:cubic-surface-2}. Fortunately, this is not the
case. Indeed, instead of using Geiser involutions in the
\emph{global} proof of Lemma~\ref{lemma:cubic-surface}, I can use

\begin{theorem}
\label{theorem:main} Let $S$ be a surface, let $P$ be a smooth
point in $S$, let $\Delta_1$ and $\Delta_2$ be two irreducible
curves on $S$ that both are smooth at $P$ and intersect
transversally at $P$, let $a_1$ and $a_2$ be non-negative rational
numbers, and let $\Omega$ be an effective $\mathbb{Q}$-divisor on
the~surface $S$ whose support does not contain the curves
$\Delta_1$ and $\Delta_2$. Suppose that the~log pair
$(S,a_{1}\Delta_{1}+a_{2}\Delta_{2}+\Omega)$ is not log canonical
at $P$. Put $m=\mathrm{mult}_{P}(\Omega)$. Suppose that
$m\leqslant 1$. Then
$\mathrm{mult}_{P}(\Omega\cdot\Delta_{1})>2(1-a_{2})$ or
$\mathrm{mult}_{P}(\Omega\cdot\Delta_{2})>2(1-a_{1})$.
\end{theorem}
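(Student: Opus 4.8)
The plan is to imitate the inductive proof of Theorem~\ref{theorem:adjunction}. Suppose the assertion fails, so that $(S,a_{1}\Delta_{1}+a_{2}\Delta_{2}+\Omega)$ is not log canonical at $P$, that $m\leqslant 1$, and that $\mathrm{mult}_{P}(\Omega\cdot\Delta_{1})\leqslant 2(1-a_{2})$ and $\mathrm{mult}_{P}(\Omega\cdot\Delta_{2})\leqslant 2(1-a_{1})$. Since $\Delta_{1}$ and $\Delta_{2}$ are smooth at $P$, one has $\mathrm{mult}_{P}(\Omega\cdot\Delta_{i})\geqslant m$, whence $m\leqslant 2(1-a_{1})$ and $m\leqslant 2(1-a_{2})$; in particular $a_{1},a_{2}\leqslant 1-\tfrac{m}{2}\leqslant 1$ and $a_{1}+a_{2}+m\leqslant 2$. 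Moreover $\mathrm{mult}_{P}(a_{1}\Delta_{1}+a_{2}\Delta_{2}+\Omega)=a_{1}+a_{2}+m>1$ by \cite[Exercise~6.18]{CoKoSm} (this also follows from Theorem~\ref{theorem:adjunction}), so $0<a_{1}+a_{2}+m-1\leqslant 1$. From the negated conclusion I will use only the two bounds $m\leqslant 2(1-a_{1})$ and $m\leqslant 2(1-a_{2})$.

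Since the pair is not log canonical at $P$, there is a composition of $r\geqslant 1$ blow-ups of smooth points over $P$ yielding an exceptional divisor with coefficient greater than $1$ in the corresponding log pull back, and I would induct on $r$. Let $f\colon\tilde{S}\to S$ be the blow-up of $P$, with exceptional curve $E$. Then $E$ enters the log pull back with coefficient $a_{1}+a_{2}+m-1\in(0,1]$, and $(\tilde{S},a_{1}\tilde{\Delta}_{1}+a_{2}\tilde{\Delta}_{2}+(a_{1}+a_{2}+m-1)E+\tilde{\Omega})$ is not log canonical at some point $Q\in E$, now witnessed by $r-1$ blow-ups. Since $\Delta_{1}$ and $\Delta_{2}$ are smooth and transversal at $P$, their proper transforms $\tilde{\Delta}_{1}$ and $\tilde{\Delta}_{2}$ are disjoint near $E$ and meet $E$ transversally in two distinct points. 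If $r=1$ then $a_{1}+a_{2}+m-1>1$, contradicting $a_{1}+a_{2}+m\leqslant 2$; so $r\geqslant 2$. If $Q\notin\tilde{\Delta}_{1}\cup\tilde{\Delta}_{2}$, then $(\tilde{S},(a_{1}+a_{2}+m-1)E+\tilde{\Omega})$ is not log canonical at $Q$, and since the coefficient of $E$ here is at most $1$, Theorem~\ref{theorem:adjunction} gives $\mathrm{mult}_{Q}(\tilde{\Omega}\cdot E)>1$, which is impossible because $\mathrm{mult}_{Q}(\tilde{\Omega}\cdot E)\leqslant\tilde{\Omega}\cdot E=m\leqslant 1$. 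Hence, after possibly interchanging the indices $1$ and $2$, $Q=\tilde{\Delta}_{1}\cap E$ (so $Q\notin\tilde{\Delta}_{2}$), and $(\tilde{S},a_{1}\tilde{\Delta}_{1}+(a_{1}+a_{2}+m-1)E+\tilde{\Omega})$ is not log canonical at $Q$.

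Now I would apply the theorem itself to this last log pair at $Q$, with $\tilde{\Delta}_{1}$ and $E$ in the roles of $\Delta_{1}$ and $\Delta_{2}$: the two curves are irreducible, smooth at $Q$ and transversal there, both coefficients are non-negative (here I use $a_{1}+a_{2}+m-1>0$), the support of $\tilde{\Omega}$ contains neither of them, and $\mathrm{mult}_{Q}(\tilde{\Omega})\leqslant\tilde{\Omega}\cdot E=m\leqslant 1$. Since the bad divisor is now extracted by $r-1$ blow-ups, the induction hypothesis applies and gives $\mathrm{mult}_{Q}(\tilde{\Omega}\cdot\tilde{\Delta}_{1})>2\bigl(1-(a_{1}+a_{2}+m-1)\bigr)=2(2-a_{1}-a_{2}-m)$ or $\mathrm{mult}_{Q}(\tilde{\Omega}\cdot E)>2(1-a_{1})$. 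The second alternative is excluded since $\mathrm{mult}_{Q}(\tilde{\Omega}\cdot E)\leqslant m\leqslant 2(1-a_{1})$. In the first, the blow-up formula $\mathrm{mult}_{P}(\Omega\cdot\Delta_{1})\geqslant m+\mathrm{mult}_{Q}(\tilde{\Omega}\cdot\tilde{\Delta}_{1})$ together with $m\leqslant 2(1-a_{1})$ gives $\mathrm{mult}_{P}(\Omega\cdot\Delta_{1})>4-2a_{1}-2a_{2}-m\geqslant 2(1-a_{2})$, contradicting the negated conclusion; the symmetric case $Q=\tilde{\Delta}_{2}\cap E$ gives $\mathrm{mult}_{P}(\Omega\cdot\Delta_{2})>2(1-a_{1})$, again a contradiction. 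This completes the induction.

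The step I expect to be the main obstacle is spotting the self-referential form of the induction --- that after blowing up $P$ the two-curve problem at the bad point $Q$ is again an instance of the same two-curve statement, with the exceptional curve $E$ playing the role of one of the $\Delta_{i}$ (or else the one-curve statement of Theorem~\ref{theorem:adjunction}, when $Q$ lies on neither proper transform) --- and, hand in hand with this, checking that the hypothesis $\mathrm{mult}_{Q}(\tilde{\Omega})\leqslant 1$ is inherited after the blow-up, which is exactly the place where $m\leqslant 1$ is used, through $\tilde{\Omega}\cdot E=m$. Once the structure is right, the numerics take care of themselves: the two bounds $m\leqslant 2(1-a_{1})$ and $m\leqslant 2(1-a_{2})$ forced by the negated conclusion are precisely what discards the unwanted alternative coming from the induction hypothesis and what turns $\mathrm{mult}_{Q}(\tilde{\Omega}\cdot\tilde{\Delta}_{1})>2(2-a_{1}-a_{2}-m)$ back into $\mathrm{mult}_{P}(\Omega\cdot\Delta_{1})>2(1-a_{2})$.
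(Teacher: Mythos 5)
Your proof is correct and follows essentially the same route as the paper's own argument: induction on the number $r$ of blow-ups extracting a non-log-canonical place, the base case $r=1$ handled by the multiplicity bound, Theorem~\ref{theorem:adjunction} used to force $Q$ onto one of the proper transforms, and the induction hypothesis applied to $(\tilde{S},a_{1}\tilde{\Delta}_{1}+(a_{1}+a_{2}+m-1)E+\tilde{\Omega})$ with the same final estimate $4-2a_{1}-2a_{2}-m\geqslant 2(1-a_{2})$. The only difference is presentational (you argue by contradiction from the negated conclusion, and you verify the inherited hypotheses $a_{1}+a_{2}+m-1\geqslant 0$ and $\mathrm{mult}_{Q}(\tilde{\Omega})\leqslant 1$ a bit more explicitly than the paper does), not a different method.
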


\begin{proof}
I may assume that $a_1\leqslant 1$ and $a_2\leqslant 1$. Then
the~log pair $(S,a_{1}\Delta_{1}+a_{2}\Delta_{2}+\Omega)$ is log
canonical in a punctured neighborhood of the point $P$, because
$m\leqslant 1$.

Since the~log pair $(S,a_{1}\Delta_{1}+a_{2}\Delta_{2}+\Omega)$ is
not log canonical at $P$, there exists a birational morphism
$h\colon\hat{S}\to S$ that is a composition of $r\geqslant 1$ blow
ups of smooth points dominating $P$, and there exists an
$h$-exceptional divisor, say $E_r$, such that $e_r>1$, where $e_r$
is a rational number determined by
$$
K_{\hat{S}}+a_1\hat{\Delta}_1+a_2\hat{\Delta}_2+\hat{\Omega}+\sum_{i=1}^{r}e_iE_i\sim_{\mathbb{Q}}h^{*}\big(K_{S}+a_{1}\Delta_{1}+a_{2}\Delta_{2}+\Omega\big),%
$$
where $e_i$ is a rational number, each $E_i$ is an $h$-exceptional
divisor,  $\hat{\Omega}$ is a proper transform on $\hat{S}$ of the
divisor $\Omega$, $\hat{\Delta}_1$ and $\hat{\Delta}_2$, are
proper transforms on $\hat{S}$ of the curves $\Delta_1$ and
$\Delta_2$, respectively.

Let $f\colon \tilde{S}\to S$ be the blow up of the point $P$, let
$\tilde{\Omega}$ be the proper transform of the divisor $\Omega$
on the surface $\tilde{S}$, let $E$ be the $f$-exceptional curve,
let $\tilde{\Delta}_1$ and $\tilde{\Delta}_2$ be the proper
transforms of the curves $\Delta_1$ and $\Delta_2$ on the surface
$\tilde{S}$, respectively. Then
$$
K_{\tilde{S}}+a_1\tilde{\Delta}_1+a_2\tilde{\Delta}_2+\big(a_1+a_2+m-1\big)E+\tilde{\Omega}\sim_{\mathbb{Q}}f^{*}\big(K_{S}+a_{1}\Delta_{1}+a_{2}\Delta_{2}+\Omega\big).%
$$
which implies that the log pair $(\tilde{S},
a_1\tilde{\Delta}_1+a_2\tilde{\Delta}_2+\big(a_1+a_2+m-1\big)E+\tilde{\Omega})$
is not log canonical at some point $Q\in E$.

If $r=1$, then $a_1+a_2+m-1>1$, which implies that $m>2-a_1-a_2$.
On the other hand, if  $m>2-a_1-a_2$, then either $m>2(1-a_1)$ or
$m>2(1-a_2)$, because otherwise I would have $2m\leqslant
4-2(a_1+a_2)$, which contradicts to $m>2-a_1-a_2$. Thus, if $r=1$,
them $\mathrm{mult}_{P}(\Omega\cdot\Delta_{1})>2(1-a_{2})$ or
$\mathrm{mult}_{P}(\Omega\cdot\Delta_{2})>2(1-a_{1})$.

Let me prove the required assertion by induction on $r$. The case
$r=1$ is done. Thus, I may assume that $r\geqslant 2$. If $Q\ne
E\cap\tilde{\Delta}_1$ and $Q\ne E\cap\tilde{\Delta}_2$, then it
follows from Theorem~\ref{theorem:adjunction} that
$m=\tilde{\Omega}\cdot E>1$, which is impossible, since
$m\leqslant 1$ by assumption. Thus, either
$Q=E\cap\tilde{\Delta}_1$ or $Q=E\cap\tilde{\Delta}_2$. Without
loss of generality, I may assume that $Q=E\cap\tilde{\Delta}_1$.

By induction, I can apply the required assertion to $(\tilde{S},
a_1\tilde{\Delta}_1+(a_1+a_2+m-1)E+\tilde{\Omega})$ at the point
$Q$. This implies that either
$$
\mathrm{mult}_{Q}\Big(\tilde{\Omega}\cdot
\tilde{\Delta}_1\Big)>2\Big(1-(a_1+a_2+m-1)\Big)=4-2a_1-2a_2-2m
$$
or $\mathrm{mult}_{Q}(\tilde{\Omega}\cdot E)>2(1-a_1)$. In the
latter case, I have
$$
\mathrm{mult}_{P}\Big(\Omega\cdot\Delta_{2}\Big)\geqslant
m>2(1-a_1),
$$
since $m=\mathrm{mult}_{Q}(\tilde{\Omega}\cdot E)>2(1-a_1)$, which
is exactly what I want. Thus, to complete the proof, I may assume
that $\mathrm{mult}_{Q}(\tilde{\Omega}\cdot
\tilde{\Delta}_1)>4-2a_1-2a_2-2m$.

If $\mathrm{mult}_{P}(\Omega\cdot\Delta_{2})>2(1-a_1)$, then I am
done. Thus, to complete the proof, I may assume that
$\mathrm{mult}_{P}(\Omega\cdot\Delta_{2})\leqslant 2(1-a_1)$. This
gives me $m\leqslant 2(1-a_1)$, since
$\mathrm{mult}_{P}(\Omega\cdot\Delta_{2})\geqslant m$. Then
$$
\mathrm{mult}_{P}\Big(\Omega\cdot\Delta_{1}\Big)\geqslant m+\mathrm{mult}_{Q}\Big(\tilde{\Omega}\cdot \tilde{\Delta}_1\Big)>m+4-2a_1-2a_2-2m=4-2a_1-2a_2-m>2(1-a_2),%
$$
because $m\leqslant 2(1-a_1)$. This completes the proof.
\end{proof}

Let me show how to prove Lemma~\ref{lemma:cubic-surface} using
Theorem~\ref{theorem:main}. This is very easy.

\begin{proof}[Local proof of Lemma~\ref{lemma:cubic-surface}]
Let me use the assumptions and notation of
Lemma~\ref{lemma:cubic-surface}. I write $T_P=L+M+N$, where $L$,
$M$, and $N$ are lines on the cubic surface $S$. Without loss of
generality, I may assume that the line $N$ does not pass through
the point $P$. Let $D$ be any effective $\mathbb{Q}$-divisor on
the surface $S$ such that $D\sim_{\mathbb{Q}} -K_{S}$. I must show
that the log pair $(S,D)$ is log canonical at $P$. Suppose that
the log pair $(S,D)$ is not log canonical at $P$. Let me seek for
a contradiction.

The log pair $(S,D)$ is log canonical in a punctured neighborhood
of the point $P$ (use \cite[Lemma~5.3.6]{CoKoSm} or the proof of
\cite[Lemma~3.4]{Ch07a}). Put $D=aL+bM+cN+\Omega$, where $a$, $b$,
and $c$ are non-negative rational numbers and $\Omega$ is an
effective $\mathbb{Q}$-divisor on $S$ whose support contains none
of the lines $L$, $M$ and $N$. Put $m=\mathrm{mult}_P(\Omega)$.

Since $(S, L+M+N)$ is log canonical, $D\ne L+M+N$. Then there
exists a non-negative rational number $\mu$ such that the divisor
$(1+\mu)D-\mu T_P$ is effective and its support does not contain
at least one components of $T_P=L+M+N$. Thus, replacing $D$ by
$(1+\mu)D-\mu T_P$, I can assume that at least one number among
$a$, $b$, and $c$ is zero. On the other hand, I know that
$$
\mathrm{mult}_{P}(D)=m+a+b>1,
$$
because the log pair $(S,D)$ is not log canonical at $P$. Thus, if
$a=0$, then
$$
1=L\cdot
D\geqslant\mathrm{mult}_{P}\big(L\big)\mathrm{mult}_{P}\big(D\big)=\mathrm{mult}_{P}\big(D\big)=m+b>1,
$$
which is absurd. This shows that $a>0$. Similarly, $b>0$.
Therefore, $c=0$. Then
$$
1=N\cdot D=N\cdot(aL+bM+\Omega)=a+b+N\cdot\Omega\geqslant a+b,
$$
which implies that $a+b\leqslant 1$. On the other hand, I know
that
$$
\left\{\aligned
&1=L\cdot\Big(aL+bM+\Omega\Big)=-a+b+L\cdot \Omega\geqslant -a+b+m,\\
&1=M\cdot\Big(aL+bM+\Omega\Big)=a-b+M\cdot \Omega\geqslant a-b+m,\\
\endaligned
\right.
$$
which implies that $m\leqslant 1$. Thus, I can apply
Theorem~\ref{theorem:main} to  $(S,aL+bM+\Omega)$. This gives
either
$$
1+a-b=\mathrm{mult}_{P}(\Omega\cdot L)>2(1-b)
$$
or $1-a+b=\mathrm{mult}_{P}(\Omega\cdot M)>2(1-a)$. Then either
$1+a-b>2-2b$ or $1-a+b>2-2a$. In both cases, $a+b>1$, which is not
the case (I proved this earlier).
\end{proof}

I was very surprised to find out that Theorem~\ref{theorem:main}
has many other applications as well. Let me show how to use
Theorem~\ref{theorem:main} to give a short proof of
Lemma~\ref{lemma:Kosta}.

\begin{proof}[{Proof of Lemma~\ref{lemma:Kosta}}]
Let me use the assumptions and notation of
Lemma~\ref{lemma:Kosta}. Every cuspidal curve in $|-K_{S}|$ is a
 Siberian tigers, since all curves in $|-K_{S}|$ are
irreducible. Let $D$ be a  Siberian tigers. I must prove that $D$
is a cuspidal curve in $|-K_{S}|$.

The pair $(S,D)$ is not log canonical at some point $P\in S$. Let
$C$ be a curve in $|-K_{S}|$ that contains $P$. If $P$ is the base
locus of the pencil $|-K_{S}|$, then $(S,C)$ is log canonical at
$P$, because every curve in the pencil $|-K_{S}|$ is smooth at its
unique base point. Moreover, if $P=O$, then $(S,C)$ is also log
canonical at $P$ by \cite[Theorem~3.3]{Park}. In the latter case,
the curve $C$ has an ordinary double point at $P$ by
\cite[Theorem~3.3]{Park}, which also follows from Kodaira's table
of singular fibers of elliptic fibration. Furthermore, if $C$ is
singular at $P$ and $(S,C)$ is not log canonical at $P$, then $C$
has an ordinary cusp at $P$.

If $D=C$ and $C$ is a cuspidal curve, then I am done. Thus, I may
assume that this is not the case. Let me seek for a contradiction.

I claim that $C\not\subseteq\mathrm{Supp}(D)$. Indeed, if $C$ is
cuspidal curve, then $C\not\subseteq\mathrm{Supp}(D)$, since $D$
is a  Siberian tiger. If $(S,C)$ is log canonical, put
$D=aC+\Omega$, where $a$ is a non-negative rational number, and
$\Omega$ is an effective $\mathbb{Q}$-divisor on $S$ whose support
does not contain the curve $C$. Then $a<1$, since
$D\sim_{\mathbb{Q}} C$ and $D\ne C$. Then
$$
\frac{1}{1-a}D-\frac{a}{1-a}C=\frac{1}{1-a}(aC+\Omega)-\frac{a}{1-a}C=\frac{1}{1-a}\Omega\sim_{\mathbb{Q}} -K_{S}%
$$
and the log pair $(S,\frac{1}{1-a}\Omega)$ is not log canonical at
$P$, because $(S,C)$ is log canonical at $P$, and $(S,D)$ is not
log canonical at $P$. Since $D$ is a  Siberian tiger, I see that
$a=0$, i.e. $C\not\subset\mathrm{Supp}(D)$.

If $P\ne O$, then
$$
1=C\cdot D\geqslant \mathrm{mult}_{P}(D),
$$
which is impossible by \cite[Excercise~6.18]{CoKoSm}, since  the
log pair $(S,D)$ is not log canonical at the point $P$. Thus, I
see that $P=O$.

Let $f\colon\tilde{S}\to S$ be a~minimal resolution of
singularities of the surface $S$. Then there are three
$f$-exceptional curves, say $E_{1}$, $E_{2}$, and $E_{3}$, such
that $E_{1}^{2}=E_{2}^{2}=E_{3}^{2}=-2$. I may assume that
$E_{1}\cdot E_{3}=0$ and $E_{1}\cdot E_{2}=E_{2}\cdot E_{3}=1$.
Let $\tilde{C}$ be the~proper transform of the~curve $C$ on
the~surface $\tilde{S}$. Then
$\tilde{C}\sim_{\mathbb{Q}}f^*(C)-E_1-E_2-E_3$.

Let $\tilde{D}$ be the~proper transform of the
$\mathbb{Q}$-divisor $D$ on the~surface $\tilde{S}$. Then
$$
\tilde{D}\sim_{\mathbb{Q}}f^*\big(D\big)-a_1E_1-a_2E_2-a_3E_3
$$
for some non-negative rational numbers $a_{1}$, $a_{2}$ and
$a_{3}$. Then
$$
\left\{\aligned
&1-a_1-a_3=\tilde{D}\cdot\tilde{C}\geqslant 0,\\
&2a_1 - a_2=\tilde{D}\cdot E_{1}\geqslant 0,\\
&2a_2 - a_1 - a_3=\tilde{D}\cdot E_{2}\geqslant 0,\\
&2a_3 - a_2=\tilde{D}\cdot E_{3}\geqslant 0,\\
\endaligned
\right.
$$
which gives $1\geqslant a_{1}+a_{3}$, $2a_1\geqslant a_2$, $3a_2
\geqslant 2a_3$, $2a_3\geqslant a_2$, $3a_2\geqslant 2a_1$,
$a_1\leqslant \frac{3}{4}$, $a_2\leqslant 1$, $a_3\leqslant
\frac{3}{4}$. On the other hand, I have
$$
K_{\tilde{S}}+\tilde{D}+\sum_{i=1}^{3}a_iE_i\sim_{\mathbb{Q}} f^{*}(K_{S}+D)\sim_{\mathbb{Q}}0,%
$$
which implies that $(\tilde{S},\tilde{D}+a_1E_1+a_2E_2+a_3E_3)$ is
not log canonical at some point $Q\in E_1\cup E_2\cup E_3$.

Suppose that $Q\in E_1$ and $Q\not \in E_2$. Then $(\tilde{S},
\tilde{D}+a_1E_1)$ is not log canonical at $Q$. Then
$2a_{1}-a_{2}=\tilde{D}\cdot E_{1}>1$ by
Theorem~\ref{theorem:adjunction}. Therefore, I have
$$
1\geqslant \frac{4}{3} a_1 \geqslant 2a_1 - \frac{2}{3} a_1\geqslant 2a_1 - a_2>1,%
$$
which is absurd. Thus, if $Q\in E_1$, then $Q=E_1\cap E_2$.
Similarly, I see that if $Q\in E_3$, then $Q=E_3\cap E_2$.

Suppose that $Q\in E_2$ and $Q\not\in E_1\cup E_3$. Then
$(\tilde{S}, \tilde{D}+a_2E_2)$ is not log canonical at $Q$. Then
$2a_{2}-a_{1}-a_{3}=\tilde{D}\cdot E_{2}>1$ by
Theorem~\ref{theorem:adjunction}. Therefore, I have
$$
1\geqslant a_{2}=2a_2-\frac{a_2}{2}-\frac{a_2}{2}\geqslant 2a_2-a_1-a_3>1,%
$$
which is absurd. Thus, I proved that either $Q=E_1\cap E_2$ or
$Q=E_3\cap E_2$. Without loss of generality, I may assume that
$Q=E_1\cap E_2$.

The log pair $(\tilde{S},\tilde{D}+a_1E_1+a_2E_2)$ is not log
canonical at $Q$. Put $m=\mathrm{mult}_{Q}(\tilde{D})$. Then
$$
\left\{\aligned
&2a_1 - a_2=\tilde{D}\cdot E_{1}\geqslant m,\\
&2a_2 - a_1 - a_3=\tilde{D}\cdot E_{2}\geqslant m,\\
&2a_3 - a_2=\tilde{D}\cdot E_{3}\geqslant 0,\\
\endaligned
\right.
$$
which implies that $a_1+a_3\geqslant 2m$. Since I already proved
that $a_{1}+a_{3}\leqslant 1$, $m\leqslant\frac{1}{2}$. Applying
Theorem~\ref{theorem:main} to the log pair
$(\tilde{S},\tilde{D}+a_1E_1+a_2E_2)$ at the point $Q$, I see that
$\tilde{D}\cdot E_1>2(1-a_2)$ or $\tilde{D}\cdot E_2>2(1-a_1)$. In
the former case, one has
$$
2a_1-a_2=\tilde{D}\cdot E_{1}>2(1-a_2),
$$
which implies that $2\geqslant 2a_1+2a_3\geqslant 2a_1+a_2>2$,
since $1\geqslant a_{1}+a_{3}$ and $2a_3\geqslant a_2$. Thus, I
proved that
$$
2a_2 - a_1 - a_3=\tilde{D}\cdot E_{2}>2(1-a_1),
$$
which implies that $2a_2+a_1>2+a_3$. Then
$2a_2+1-a_3>2a_2+a_1>2+a_3$, since $a_{1}+a_{3}\leqslant 1$. The
last inequality implies that $2a_2>1+2a_3$. Since I already proved
that $2a_3\geqslant a_2$ , I conclude that $2a_2>1+a_2$, which is
impossible, since $a_1\leqslant 1$. The obtained contradiction
completes the proof.
\end{proof}

Similarly, I can use Theorem~\ref{theorem:main} instead of
Theorem~\ref{theorem:Kosta} in the \emph{local} proof of
Lemma~\ref{lemma:cubic-surface-2} (I leave the details to the
reader). Theorem~\ref{theorem:main} has a nice and clean inductive
proof like Theorem~\ref{theorem:adjunction} has. So, what if
Theorem~\ref{theorem:main} is the desired generalization of
Theorem~\ref{theorem:Kosta}? This may seems unlikely keeping in
mind how both theorems look like. However,
Theorem~\ref{theorem:main} does generalize
Theorem~\ref{theorem:Kosta-original}, which is the ancestor and a
special case of Theorem~\ref{theorem:Kosta}. The latter follows
from

\begin{remark}
\label{remark:main-impies-Kosta-local} Let $S$ be a surface, let
$\Delta_1$ and $\Delta_2$ be two irreducible curves on $S$ that
are both smooth at $P$ and intersect transversally at $P$. Take an
effective $\mathbb{Q}$-divisor $a_1\Delta_1+a_2\Delta_2+\Omega$,
where $a_1$ and $a_2$ are non-negative rational numbers, and
$\Omega$ is an effective $\mathbb{Q}$-divisor on the~surface $S$
whose support does not contain the curves $\Delta_1$ and
$\Delta_2$. Put $m=\mathrm{mult}_{P}(\Omega)$. Let $n$ be a
positive integer such that $n\geqslant 3$.
Theorem~\ref{theorem:Kosta-original} asserts that
$\mathrm{mult}_{P}(\Omega\cdot\Delta_{1})>2a_{1}-a_{2}$ or
$$
\mathrm{mult}_{P}\Big(\Omega\cdot\Delta_{2}\Big)>\frac{n}{n-1}a_{2}-a_{1}
$$
provided that $\frac{2n-2}{n+1}a_{1}+\frac{2}{n+1}a_{2}\leqslant
1$ and the~log pair $(S,a_{1}\Delta_{1}+a_{2}\Delta_{2}+\Omega)$
is not log canonical at $P$. On the other hand,
$\mathrm{mult}_{P}(\Omega\cdot\Delta_{1})\geqslant m$ and
$\mathrm{mult}_{P}(\Omega\cdot\Delta_{2})\geqslant m$. Thus,
Theorem~\ref{theorem:Kosta-original} asserts something non-obvious
only if
\begin{equation}
\label{equation:Kosta-local}
\left\{%
\aligned
&2a_{1}-a_{2}\geqslant m,\\%
&\frac{n}{n-1}a_{2}-a_{1}\geqslant m,\\%
&\frac{2n-2}{n+1}a_{1}+\frac{2}{n+1}a_{2}\leqslant 1.\\%
\endaligned\right.%
\end{equation}
Note that (\ref{equation:Kosta-local}) implies that $a_1\leqslant
\frac{1}{2}$, $a_2\leqslant 1$, and $m\leqslant 1$. Thus, if
(\ref{equation:Kosta-local}) holds, then I can apply
Theorem~\ref{theorem:main} to the log pair $(S,
a_1\Delta_1+a_2\Delta_2+\Omega)$ to get
$\mathrm{mult}_{P}(\Omega\cdot\Delta_{1})>2(1-a_{2})$ or
$\mathrm{mult}_{P}(\Omega\cdot\Delta_{2})>2(1-a_{1})$. On the
other hand, if (\ref{equation:Kosta-local}) holds, then
$2(1-a_{2})\geqslant 2a_{1}-a_{2}$ and
$$
2(1-a_1)\geqslant \frac{2n-2}{n+1}a_{1}+\frac{2}{n+1}a_{2}.
$$
\end{remark}

Nevertheless, Theorem~\ref{theorem:main} is not a generalization
of Theorem~\ref{theorem:Kosta}, i.e. I can not use
Theorem~\ref{theorem:main} instead of Theorem~\ref{theorem:Kosta}
in general. I checked this in many cases considered in
\cite{Ch13}. To convince the reader, let me give

\begin{example}
\label{example:new-versus-old} Put $S=\mathbb{P}^2$. Take some
integers $n\geqslant 2$ and $k\geqslant 2$. Put $r=km(m-1)$. Let
$C$ be a curve in $S$ that is given by $z^{r-1}y=x^r$, where
$[x:y:z]$ are projective coordinates on $S$. Put $\Omega=\lambda
C$ for some positive rational number $\lambda$. Let $\Delta_1$ be
a line in $S$ that is given by $x=0$, and let $\Delta_2$ be a line
in $S$ that is given by $y=0$. Put $a_1=\frac{1}{m}$ and
$a_2=1-\frac{1}{m}$. Let $P$ be the intersection point
$\Delta_1\cap\Delta_2$. Then $(S, a_1\Delta_1+a_2\Delta_2+\Omega)$
is log canonical $P$ if and only if
$\lambda\leqslant\frac{1}{m}+\frac{1}{km^2}$. Take any
$\lambda>\frac{1}{m}+\frac{1}{km^2}$ such that
$\lambda<\frac{k}{km-1}$. Then
$\mathrm{mult}_{P}(\Omega)=\lambda<\frac{2}{m}\leqslant 1$ and
$$
\mathrm{mult}_{P}\Big(\Omega\cdot\Delta_1\Big)=\lambda<\frac{k}{km-1}<\frac{2}{m}=2(1-a_2),
$$
which implies that
$$
k(m-1)+\frac{m-1}{m}>km(m-1)\lambda=\mathrm{mult}_{P}\Big(\Omega\cdot\Delta_2\Big)>2(1-a_1)=\frac{2m-2}{m}
$$
by Theorem~\ref{theorem:main}. Taking $\lambda$ close enough to
$\frac{1}{m}+\frac{1}{km^2}$, I can get
$\mathrm{mult}_{P}(\Omega\cdot\Delta_2)$ as close to
$k(m-1)+\frac{m-1}{m}$ as I want. Thus, the inequality
$\mathrm{mult}_{P}(\Omega\cdot\Delta_2)>\frac{2m-2}{m}$ provided
by Theorem~\ref{theorem:main} is not very good when $k\gg 0$. Now
let me apply Theorem~\ref{theorem:Kosta} to the log pair $(S,
a_1\Delta_1+a_2\Delta_2+\Omega)$ to get much better estimate for
$\mathrm{mult}_{P}(\Omega\cdot\Delta_2)$. Put $\alpha=1$,
$\beta=1$, $M=1$, $B=km$, $A=\frac{1}{km-1}$, and $N=0$. Then
$$
\left\{%
\aligned
&1=\alpha a_{1}+\beta a_{2}\leqslant 1,\\
&1=A(B-1)\geqslant 1,\\
&1=M\leqslant 1,\\
&0=N\leqslant 1,\\
&\frac{1}{km-1}=\alpha(A+M-1)\geqslant A^{2}(B+N-1)\beta=\frac{1}{km-1},\\
&\frac{1}{km-1}=\alpha(1-M)+A\beta\geqslant A=\frac{1}{km-1},\\
&2=2M+AN\leqslant 2.\\
\endaligned\right.\\
$$
By Theorem~\ref{theorem:Kosta},
$\mathrm{mult}_{P}(\Omega\cdot\Delta_{1})>M+Aa_{1}-a_{2}$ or
$\mathrm{mult}_{P}(\Omega\cdot\Delta_{2})>N+Ba_{2}-a_{1}$. Since
$\mathrm{mult}_{P}\big(\Omega\cdot\Delta_1\big)=\lambda<\frac{k}{km-1}=M+Aa_{1}-a_{2}$,
it follows from Theorem~\ref{theorem:Kosta} that
$$
\mathrm{mult}_{P}(\Omega\cdot\Delta_{2})>N+Ba_{2}-a_{1}=k(m-1)-\frac{1}{m}.
$$
For $k\gg 0$, the latter inequality is much stronger than
$\mathrm{mult}_{P}(\Omega\cdot\Delta_{2})>\frac{2m-2}{m}$ given by
Theorem~\ref{theorem:main}. Moreover, I can always choose
$\lambda$ close enough to $\frac{1}{m}+\frac{1}{km^2}$ so that the
multiplicity
$\mathrm{mult}_{P}(\Omega\cdot\Delta_2)=km(m-1)\lambda$ is as
close to $k(m-1)+\frac{m-1}{m}$ as I want. This shows that the
inequality
$\mathrm{mult}_{P}(\Omega\cdot\Delta_{2})>k(m-1)-\frac{1}{m}$
provided by Theorem~\ref{theorem:Kosta} is almost sharp.
\end{example}

I have a strong feeling that Theorems~\ref{theorem:Kosta} and
\ref{theorem:main} are special cases of some more general result
that is not yet found. Perhaps, it can be found by analyzing the
proofs of Theorems~\ref{theorem:Kosta} and \ref{theorem:main}.

\end{document}